\newcommand{\blst}{\begin{trivlist}}
\newcommand{\elst}{\end{trivlist}}
\newtheorem{thm}{Theorem}[section]
\newtheorem{prop}[thm]{Proposition}
\newtheorem{cor}[thm]{Corollary}
\newtheorem{lem}[thm]{Lemma}
\newtheorem{conj}[thm]{Conjecture}
\newtheorem{exa}[thm]{Example}
\newtheorem{defn}[thm]{Definition}
\newcommand{\ben}{\begin{enumerate}}
\newcommand{\een}{\end{enumerate}}
\newcommand{\ble}{\begin{lem}}
\newcommand{\ele}{\end{lem}}
\newcommand{\bth}{\begin{thm}}
\renewcommand{\eth}{\end{thm}}
\newcommand{\bpr}{\begin{prop}}
\newcommand{\epr}{\end{prop}}
\newcommand{\bco}{\begin{cor}}
\newcommand{\eco}{\end{cor}}
\newcommand{\bcon}{\begin{conj}}
\newcommand{\econ}{\end{conj}}
\newcommand{\bde}{\begin{defn}}
\newcommand{\ede}{\end{defn}}
\newcommand{\bex}{\begin{exa}}
\newcommand{\eex}{\end{exa}}
\newcommand{\barr}{\begin{array}}
\newcommand{\earr}{\end{array}}
\newcommand{\btab}{\begin{tabular}}
\newcommand{\etab}{\end{tabular}}
\newcommand{\beq}{\begin{equation}}
\newcommand{\eeq}{\end{equation}}
\newcommand{\bea}{\begin{eqnarray*}}
\newcommand{\eea}{\end{eqnarray*}}
\newcommand{\beaa}{\begin{eqnarray}}
\newcommand{\eeaa}{\end{eqnarray}}
\newcommand{\bce}{\begin{center}}
\newcommand{\ece}{\end{center}}
\newcommand{\bpi}{\begin{picture}}
\newcommand{\epi}{\end{picture}}
\newcommand{\bfi}{\begin{figure} \begin{center}}
\newcommand{\efi}{\end{center} \end{figure}}
\newcommand{\bsl}{\begin{slide}{}}
\newcommand{\esl}{\end{slide}}
\newenvironment{proof}{
\par
\noindent {\bf Proof.}\rm}{\mbox{}\hfill\rule{0.5em}{0.809em}\par}
\begin{document}
\title{Generalizations of The Chung-Feller Theorem}

\author{
Jun Ma$^{a,}$\thanks{Email address of the corresponding author:
majun@math.sinica.edu.tw}
 \and Yeong-Nan Yeh $^{b,}$\thanks{Partially supported by NSC 96-2115-M-001-005}}

\date{}
\maketitle \vspace{-1cm} \bce \footnotesize
 $^{a,b}$ Institute of Mathematics, Academia Sinica, Taipei, Taiwan\\
\ece

\thispagestyle{empty}\vspace*{.4cm}

\begin{abstract} The classical Chung-Feller theorem [2] tells us that
the number of Dyck paths of length $n$ with flaws $m$ is the $n$-th
Catalan number and independent on $m$. L. Shapiro [7] found the
Chung-Feller properties for the Motzkin paths. In this paper,  we
find the connections between these two Chung-Feller theorems. We
focus on the weighted versions of three classes of lattice paths and
give the generalizations of the above two theorems. We prove the
Chung-Feller theorems of Dyck type for these three classes of
lattice paths and the Chung-Feller theorems of Motzkin type for two
of these three classes. From the obtained results, we find an
interesting fact that many lattice paths have the Chung-Feller
properties of both Dyck type and Motzkin type.
\end{abstract}

\noindent {\bf Keywords: Chung-Feller Theorem; Dyck path;  Lattice
path; Motzkin path;
 Pointed path }

\section{Introduction}
Let $\mathcal{S}$ be a subset of the set $\mathbb{Z}\times
\mathbb{Z}\setminus\{(0,0)\}$, where $\mathbb{Z}$ is the set of the
integers. We call $\mathcal{S}$ the {\it step set}. Let $k$ be an
integer.
\begin{defn} An $(\mathcal{S},k)$-lattice path is a path in
$\mathbb{Z}\times \mathbb{Z}$ which:

(a) is made only of steps in $\mathcal{S}$;

(b) starts at $(0,0)$ and ends on the line $y=k$.\\
If it is made of $m$ steps and ends at $(r,k)$, we say that it is of
order $m$ and size $r$.
\end{defn}
Let $\mathscr{L}^k$ be the set of all the $(\mathcal{S},k)$-lattice
path. For short we call $(\mathcal{S},0)$-lattice paths
$\mathcal{S}$-paths  and write $\mathscr{L}^0$ as $\mathscr{L}$. Let
$w$ and $l$ be two mappings from $\mathcal{S}$ to $\mathbb{R}$,
where $\mathbb{R}$ is the set of the real number. We say that $w$
and $l$ are the weight function and the length function of
$\mathcal{S}$ respectively. For any $s\in\mathcal{S}$, $w(s)$ and
$l(s)$ are called the {\it weight} and the {\it length} of the step
$s$ respectively. We can view an $(\mathcal{S},k)$-lattice path $L$
of order $m$ as a word $s_{1}s_{2}\ldots s_{m}$, where
$s_{j}\in\mathcal{S}$. In this word, let $s_{j}$ denote the $j$-th
letter from the left. Define the weight $w(L)$ and the length $l(L)$
of the path $L$ as
$$w(L)=\prod\limits_{j=1}^mw(s_{j})\text{ and }l(L)=\sum\limits_{j=1}^ml(s_{j}).$$ Moreover, we can consider
the path $L$ as a sequence of the points
$$(0,0)=(x_0,x_0),(x_1,y_1),(x_2,y_2),\ldots,(x_m,y_m),$$ where
$(x_{j},y_{j})$ is the end point of the step $s_{j}$ in the lattice
path $L$ for $j\geq 1$. Let $h(s_{j})=h_L(s_{j})=y_{j}$ for all
$j\geq 1$. We say that $h(s_{j})$ is the {\it height} of the step
$s_{j}$ in $L$. Define
$$\bar{l}(L)=\sum\limits_{s_{j}\in L,h(s_{j})\leq 0}l(s_{j}).$$
$\bar{l}(L)$ is called the {\it non-positive length} of $L$. A {\it
minimum point} is a point $(x_i,y_i)$ in the path $L$ such that
$y_i\leq y_j$ for all $j\neq i$, let $m(L)=y_i$, we call $m(L)$ the
{\it minimum value} of $L$. A {\it absolute minimum point} is a
minimum point $(x_i,y_i)$ such that the point is the rightmost one
among all the minimum points, and the index $i$ is called the {\it
absolute minimum position}, denoted by $mp(L)$. Finally, we define
the {\it absolute minimum length} $ml(L)$ of the path $L$ as
$$ml(L)=\sum\limits_{1\leq j\leq mp(L)}l(s_j).$$
\begin{defn} An $(\mathcal{S},k)$-nonnegative path is an $(\mathcal{S},k)$-lattice path
which never goes below the line $y=k$.
\end{defn}
Let $\mathscr{N}^k$ be the set of all the
$(\mathcal{S},k)$-nonnegative path. For short we call
$(\mathcal{S},0)$-nonnegative path $\mathcal{S}$-nonnegative path
and  write $\mathscr{N}^0$ as $\mathscr{N}$.

Now, we set $\mathcal{S}=\{(1,1),(1,-1)\}$, $w(s)=1$ for
$s\in\mathcal{S}$, $l((1,1))=1$ and $l((1,-1))=0$. In this situation
, an $(\mathcal{S},0)$-nonnegative path is called Dyck path as well.
We may state the classical Chung-Feller Theorem [2] as follows:

{\it The number of $(\mathcal{S},0)$-lattice paths with length $n$
and non-positive length $m$ is equal to the number of the Dyck paths
with length $n$ and independent on $m$.}

It is well known that the number of Dyck paths with length $n$ is
the $n$-th Catalan number $c_n=\frac{1}{n+1}{2n\choose{n}}$. The
generating function $C(z):=\sum_{n\ge 0}c_n z^n$ satisfies the
functional equation $C(z)=1+zC(z)^2$ and
$C(z)=\frac{1-\sqrt{1-4z}}{2z}$ explicitly.

The Chung-Feller Theorem were proved by using analytic method in
[2]. T.V.Narayana [6] showed the Chung-Feller Theorem by
combinatorial methods. S.P.Eu et al. [3] proved the Chung-Feller
Theorem by using the Taylor expansions of generating functions and
gave a refinement of this theorem. In [4], they gave a strengthening
of the Chung-Feller Theorem and a weighted version for Schr\"{o}der
paths. Y.M. Chen [1] revisited the Chung-Feller Theorem by
establishing a bijection.

Moreover, if we set $\mathcal{S}=\{(1,1),(1,-1),(1,0)\}$, $w(s)=1$
and $l(s)=1$ for $s\in\mathcal{S}$, then
$(\mathcal{S},0)$-nonnegative paths are the famous Motzkin paths. L.
Shapiro [7] found the following Chung-Feller phenomenons for the
Motzkin paths.

{\it The number of $(\mathcal{S},1)$-lattice paths with length $n+1$
and absolute minimum length $m$ is equal to the number of the
Motzkin paths with length $n$ and independent on m.}

It is well known that the number of Motzkin paths with length $n$ is
the $n$-th Motzkin number $m_n$. The generating function
$M(z):=\sum_{n\ge 0}m_n z^n$ satisfies $M(z)=1+zM(z)+z^2M(z)^2$ and
explicitly $M(z)=\frac{1-z-\sqrt{1-2z-3z^2}}{2z^2}$. Recently,
Shu-Chung Liu et al. [5] use an unify algebra approach to prove
chung-Feller theorems for Dyck path and Motzkin path and develop a
new method to find some combinatorial structures which have  the
Chung-Feller property.

The direct motivations of this paper come from the following two
problems:

(1) When $\mathcal{S}=\{(1,1),(1,-1)\}$, $w(s)=1$ for
$s\in\mathcal{S}$, $l((1,1))=1$ and $l((1,-1))=0$, is the number of
$\mathcal{S}$-paths with length $n$ and absolute minimum length $m$
independent on $m$ ?

(2) When $\mathcal{S}=\{(1,1),(1,-1),(1,0)\}$, $w(s)=1$ and $l(s)=1$
for $s\in\mathcal{S}$, is the number of $\mathcal{S}$-paths with
length $n$ and non-positive length $m$ independent on $m$?

We find that the answers of these two problems are yes. In fact, in
this paper, let $A$ and $B$ be two finite subsets of the set
$\mathbb{P}$, where $\mathbb{P}$ is the set of the positive
integers. We consider the weighted versions of the following three
classes of lattice paths.

{\bf Class 1.}
$\mathcal{S}_1=\mathcal{S}_{A}\cup\mathcal{S}_{B}\cup\{(1,1)\}$,
where $\mathcal{S}_{A}=\{(2i-1,-1)\mid i\in A\}$ and
$\mathcal{S}_{B}=\{(2i,0)\mid i\in B\}.$

 For any step $s\in
\mathcal{S}_1$, let
\begin{center}$l(s)=\left\{\begin{array}{lll}
i&\text{if}&s=(2i,0),\\
i-1&\text{if}&s=(2i-1,-1),\\
1&\text{if}&s=(1,1),
\end{array}\right.
$$w(s)=\left\{\begin{array}{lll}
b_i&\text{if}&s=(2i,0),\\
a_i&\text{if}&s=(2i-1,-1),\\
1&\text{if}&s=(1,1).\\
\end{array}\right.
$\end{center}

{\bf Class 2.}
$\mathcal{S}_2=\mathcal{S}_{A}\cup\mathcal{S}_{B}\cup\{(1,1)\}$,
where $\mathcal{S}_{A}=\{(i,-1)\mid i\in A\}$ and
$\mathcal{S}_{B}=\{(i,0)\mid i\in B\}.$

 For any step $s\in
\mathcal{S}_2$, let
\begin{center} $l(s)=\left\{\begin{array}{lll}
i&\text{if}&s=(i,0)\text{ and }(i,-1),\\
1&\text{if}&s=(1,1),\\
\end{array}\right.
$ $w(s)=\left\{\begin{array}{lll}
b_i&\text{if}&s=(i,0),\\
a_i&\text{if}&s=(i,-1),\\
1&\text{if}&s=(1,1).\\
\end{array}\right.
$\end{center}

 {\bf Class 3.}
$\mathcal{S}_3=\mathcal{S}_{A}\cup\mathcal{S}_{B}\cup\{(1,1)\}$,
where $\mathcal{S}_{A}=\{(1,-2i+1)\mid i\in A\}$ and
$\mathcal{S}_{B}=\{(2i,0)\mid i\in B\}.$

For any step $s\in \mathcal{S}_3$, let
\begin{center}
$l(s)=\left\{\begin{array}{lll}
i&\text{if}&s=(2i,0),\\
0&\text{if}&s=(1,-2i+1),\\
1&\text{if}&s=(1,1),\\
\end{array}\right.
$$w(s)=\left\{\begin{array}{lll}
b_i&\text{if}&s=(2i,0),\\
a_i&\text{if}&s=(1,-2i+1),\\
1&\text{if}&s=(1,1).\\
\end{array}\right.
$\end{center}

 First, we give the definition of the pointed lattice paths.
Then we define two parameters on the pointed lattice paths:
non-positive pointed length and absolute minimum pointed length. So,
for any step set $\mathcal{S}$, we say that the pointed
$(\mathcal{S},k)$-lattice paths have the {\it Chung-Feller
properties of Dyck type ( resp. Motzkin type)} if the sum of the
weights of all the pointed $(\mathcal{S},k)$-lattice paths with
length $n$ and non-positive pointed length ( resp. absolute minimum
pointed length) $m$ are independent on $m$. Finally, we prove the
Chung-Feller theorem of Dyck type for the above three classes of
lattice paths and the Chung-Feller theorem of Motzkin type for
Classes 1 and 2. From the obtained results, we find an interesting
fact that many lattice paths have the Chung-Feller properties of
both Dyck type and Motzkin type. These results tell us that there
are closed relations between two parameters of lattice paths:
non-positive pointed length and absolute minimum pointed length.

This paper is organized as follows. In Section 2, we give the
definition of the pointed lattice path and the definitions of two
parameters on the pointed lattice path: non-positive pointed length
and absolute minimum pointed length. In Section 3, we prove the
Chung-Feller Theorem of Dyck type for Classes 1,2,3. In Section 4,
we prove the Chung-Feller Theorem of Motzkin type for Classes 1,2.
In Section 5, we give some interesting facts and problems.

\section{The pointed path}
Throughout the paper, we let the step set $\mathcal{S}_i$ as well as
the corresponding weight function $w$ and
 length function $l$ be defined as that in Classes $i=1,2,3$.
In this section, we will give the definition of the pointed lattice
path and the definitions of two parameters of the pointed lattice
path: non-positive pointed length and absolute minimum pointed
length.

Let $L=s_{1}s_{2}\ldots s_{m}$ be an $(\mathcal{S}_i,k)$-lattice
path with $l(s_{m})\geq 1$. Recall that $L$ can be viewed as a
sequence of the points
$$(0,0)=(x_{0},x_{0}),(x_{1},y_{1}),(x_{2},y_{2}),\ldots,(x_{m},y_{m}),$$ where
$(x_{j},y_{j})$ is the end point of the step $s_{j}$ in the lattice
path $L$ for $j\geq 1$. For Classes $1$ and $3$ let
$\dot{L}=[L,(x_{m}-2j,0)]$ and for Classes $2$ let
$\dot{L}=[L,(x_{m}-j,0)]$ for some $0\leq j\leq l(s_{m})-1$,
moreover, let $p(\dot{L_i})=j$. $\dot{L}$ is called the {\it pointed
path} since it denote the the path $L$  marked by a point on the
$x$-axis and $p(\dot{L})$ is called the {\it pointed length} of
$\dot{L}_i$. Let $\mathscr{M}_i^k$ denote the set of all the pointed
$(\mathcal{S}_i,k)$-lattice path  in which the length of the final
step is no less than $1$. For short we write $\mathscr{M}_i^0$ as
$\mathscr{M}_i$.
\begin{defn} Given a path
$\dot{L}\in{\mathscr{M}_i^1}$, let
$\overline{lp}(\dot{L})=\bar{l}(L)+p(\dot{L})$.
$\overline{lp}(\dot{L})$ is called the {\it non-positive pointed
length} of $\dot{L}$. Let $mlp(\dot{L})=ml(L)+p(\dot{L})$.
$mlp(\dot{L})$ is called the {\it absolute minimum  pointed length}
of $\dot{L}$.
\end{defn}
\begin{exa} Let $\mathcal{S}=\{(1,1),(1,0),(5,-1)\}$, $l((1,1))=1$, $l((1,0))=1$ and $l((5,-1))=5$.
We draw a pointed $\mathcal{S}$-path $\dot{L}$ of length $14$ as
follows, where $*$ denote the marked point.
\begin{center}
\includegraphics[width=10cm]{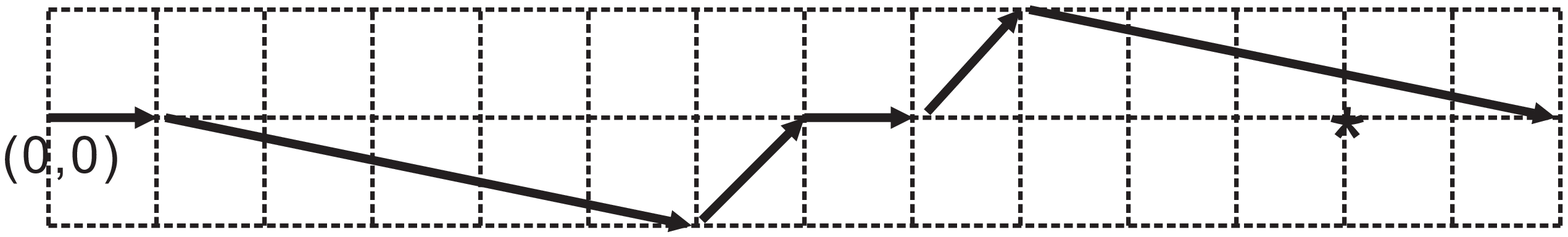}\\
Fig.1. A pointed $\mathcal{S}$-path $\dot{L}$ of length $14$
\end{center}
Note that the path is in Class $2$. So, it is easy to see $l(L)=14$
$p(\dot{L})=2$, $\bar{l}(L)=8$, and $ml(L)=6$. Hence,
$\overline{lp}(\dot{L})=10$ and $\overline{mlp}(\dot{L})=8$.
\end{exa}
For Classes $i=1,2,3$, define the generating functions
\begin{eqnarray*}D_i(y,z)&=&\sum\limits_{\dot{L}\in{\mathscr{M}_i^1}}w(L)y^{\overline{lp}(\dot{L})}z^{l(L)-1}\\
M_i(y,z)&=&\sum\limits_{\dot{L}\in{\mathscr{M}_i^1}}w(L)y^{mlp(\dot{L})}z^{l(L)-1}.
\end{eqnarray*}
Let $\bar{f}_{i;n,m}$ ( resp. $\bar{g}_{i;n,m}$) be the sum of the
weights of the pointed $(\mathcal{S}_i,1)$-lattice paths
$\dot{L}\in\mathscr{M}_i^1$ with length $n+1$ and non-positive
pointed length ( resp. absolute minimum pointed length ) $m$ for
$(n,m)\neq (0,0)$ and $\bar{f}_{i;0,0}=\bar{g}_{i;0,0}=1$. It is
easy to see
\begin{eqnarray}D_i(y,z)=\sum\limits_{n\geq 0}\sum\limits_{m=
0}^n\bar{f}_{i;n,m}y^mz^n.\end{eqnarray} and
\begin{eqnarray}M_i(y,z)=\sum\limits_{n\geq 0}\sum\limits_{m=
0}^n\bar{g}_{i;n,m}y^mz^n.\end{eqnarray}

Let $\mathscr{N}_i$ be the set of all the
$\mathcal{S}_i$-nonnegative path. Define the generating function
$$F_i(z)=\sum\limits_{L\in\mathscr{N}_i}w(L)z^{l(L)}.$$
\begin{lem}\label{Dycktypegenerating} For Classes 1,2 and 3, we have
\\
(1) $F_1(z)=1+\left(\sum\limits_{i\in
B}b_{i}z^i\right)F_1(z)+\left(\sum\limits_{i\in A}a_{i}z^i\right)[F_1(z)]^2$\\
(2) $F_2(z)=1+\left(\sum\limits_{i\in
B}b_{i}z^i\right)F_2(z)+\left(\sum\limits_{i\in
A}a_{i}z^{i+1}\right)[F_2(z)]^2,$\\
(3) $F_3(z)=1+\left(\sum\limits_{i\in
B}b_{i}z^i\right)F_3(z)+\left(\sum\limits_{i\in
A}a_{i}z^i\right)[F_3(z)]^{i+1}. $
\end{lem}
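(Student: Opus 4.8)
The plan is to prove all three identities by the same device, the \emph{first-return decomposition} of a nonnegative path. Fix a class and take a nonempty $L\in\mathscr{N}_i$; look at its first step. Since $L$ starts at the origin and never goes below $y=0$, this first step cannot be a down step (that would carry the path to negative height), so it is either a flat step or the up step $(1,1)$. If the first step is a flat step, it is $(2i,0)$ (Classes 1,3) or $(i,0)$ (Class 2) for some $i\in B$, the remaining word is again an arbitrary element of $\mathscr{N}_i$, and weight and length factor multiplicatively; summing over $i\in B$ yields the term $\left(\sum_{i\in B}b_iz^i\right)F_i(z)$, while the empty path contributes the summand $1$. This reproduces the first two terms of each equation uniformly.

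The substance is the case where the first step is $(1,1)$. Here I would cut $L$ at its first return to the line $y=0$, writing $L=(1,1)\,M\,s\,N$, where $s$ is the step that first brings the path back to $0$ and $N$ is the part after it. The nonnegativity forces $s$ to be a down step, and $N$ is again an arbitrary element of $\mathscr{N}_i$, contributing one factor $F_i(z)$. For Classes 1 and 2 the down step lowers the height by exactly one, so $s$ descends from level $1$ to level $0$ and the middle block $M$ is just a nonnegative excursion run at level $1$, i.e. a second factor $F_i(z)$; tracking the weights and lengths of $(1,1)$, $M$, $s$, $N$ then gives $\left(\sum_{i\in A}a_iz^{i}\right)[F_1(z)]^2$ for Class~1 (the down step $(2i-1,-1)$ has length $i-1$, so $(1,1)$ and $s$ together carry $z^{i}$) and $\left(\sum_{i\in A}a_iz^{i+1}\right)[F_2(z)]^2$ for Class~2 (here $(i,-1)$ has length $i$, so $(1,1)$ and $s$ together carry $z^{i+1}$). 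Adding the three contributions closes the equations for Classes 1 and 2.

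The main obstacle is Class 3, where the down step $(1,-2i+1)$ drops the height by $2i-1$ instead of by one. Now the returning step $s$ starts from an elevated level, so the middle block $M$ is a nonnegative path that must first climb from level $1$ up to the level from which $s$ descends. The extra ingredient is to decompose such an ascending block by \emph{last passages}: recording, for each intermediate level, the last time the path occupies it, one factors $M$ (after the initial up step) into a sequence of nonnegative excursions separated by up steps, one excursion per level crossed. This last-passage step is precisely what replaces the single square $[F_i(z)]^2$ of Classes 1 and 2 by a higher power of $F_3(z)$, and the delicate bookkeeping is to get the exact number of excursion factors and the exact power of $z$ (each intervening up step carrying a $z$, the down step $s$ carrying length $0$) right; reassembling these with the trailing $F_3(z)$ from $N$ and summing over $i\in A$ finishes the argument. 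As a sanity check I would test all three equations on the degenerate data $A=\{1\}$, $B=\varnothing$, where each collapses to the Catalan relation $C(z)=1+zC(z)^2$.
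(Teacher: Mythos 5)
Your treatment of Classes 1 and 2 is correct, and it is the same first-return decomposition the paper itself uses, with the weights and lengths tracked correctly. The problem is Class 3, and it sits exactly in the step you defer as ``delicate bookkeeping'': if you carry that bookkeeping out for the step set as printed, $\mathcal{S}_A=\{(1,-2i+1)\mid i\in A\}$, the identity you are asked to prove does not come out. The return step $t=(1,-2i+1)$ falls by $2i-1$, so it departs from height $2i-1$; your last-passage factorization of the climbing block therefore produces $2i-1$ up steps (each carrying one factor $z$) and $2i-1$ intermediate nonnegative blocks, and together with the trailing factor coming from $N$ this yields the term
$$\sum_{i\in A}a_i\,z^{2i-1}\,[F_3(z)]^{2i},$$
not the claimed $\left(\sum_{i\in A}a_iz^{i}\right)[F_3(z)]^{i+1}$. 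The two expressions agree only when $i=1$, which is precisely the degenerate case $A=\{1\}$ you chose as your sanity check; that is why the check detects nothing. So as written your argument cannot close: either you complete the computation and obtain exponents $2i-1$ and $2i$ (thereby contradicting, not proving, statement (3)), or you must read the Class 3 down step as falling by $i$ rather than $2i-1$.

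For comparison, the paper's own proof does the latter implicitly: it decomposes $L$ as $s_1R_1s_2R_2\cdots s_iR_itQ$ with exactly $i$ up steps before $t$, which is height-consistent only if $t$ falls by $i$, i.e.\ as if $\mathcal{S}_A$ were $\{(1,-i)\mid i\in A\}$; under that reading the displayed formula (and its specialization in Corollary 3.8, where the step $(1,-3)$ must then be indexed by $i=3$) checks out against direct enumeration. So the mismatch you would have hit is a genuine inconsistency between the printed definition of Class 3 and the lemma, and a complete proof has to confront it explicitly rather than wave at the bookkeeping. One point in your favor: your last-passage decomposition is the right tool and is in fact sounder than the paper's phrasing ``first step $(1,1)$ of height $j$'' --- when down steps can fall by more than one, the block between two consecutive first passages may dip below its starting level and so need not lie in $\mathscr{N}_3$, whereas the blocks between consecutive last passages always do.
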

\begin{proof} (1) Given a path $L\in \mathscr{N}_1$ and $L\neq
\emptyset$, we suppose that $s$ is the first step of ${L}$ and
discuss the following two cases:

{\it Case I.} $s=(2i,0)$ for some $i\in B$. We can decompose the
path $L$ into $sR$, where $R\in\mathscr{N}_1$. Note that $l(s)=i$
and $w(s)=b_i$. This provide the term $\left(\sum\limits_{i\in
B}b_iz^i\right)F_1(z)$.

{\it Case II.} $s=(1,1)$. Let $t$ be the first step returning to the
$x$-axis. We can decompose the path $L$ into $sRtQ$, where
$R,Q\in\mathscr{N}_1$ and $t=(2i-1,-1)$ for some $i\in A$. Note that
$l(s)=1$ and $w(s)=1$, $l(t)=i-1$ and $w(t)=a_i$. This provide the
term $\left(\sum\limits_{i\in A}a_iz^i\right)[F_1(z)]^2$.

(2) The proof is similar to that of (1).

(3) Given a path $L\in \mathscr{N}_3$ and $L\neq \emptyset$, we
suppose that $s$ is the first step of ${L}$ and discuss the
following two cases:

{\it Case I.} $s=(2i,0)$ for some $i\in B$. Similar to Case I in
(1), we can obtain the term $\left(\sum\limits_{i\in
B}b_iz^i\right)F_3(z)$.

{\it Case II.} $s\notin\mathcal{S}_{B}$. Let $t=(1,-2i+1)$ be the
first step returning to the $x$-axis for some $i\in A$. Let $s_j$ be
the first step $(1,1)$ of height $j$ for $1\leq j\leq i$. We can
decompose the path $L$ into $s_1R_1s_2R_2\ldots s_iR_itQ$, where
 $R_j\in\mathscr{N}_3$ for all $j$ and $Q\in\mathscr{N}_3$. Note that
$l(s_j)=1$ and $w(s_j)=1$ for all $j$, $l(t)=0$ and $w(t)=a_i$. This
provide the term $\left(\sum\limits_{i\in
A}a_iz^i\right)[F_3(z)]^{i+1}$.
\end{proof}

For Classes $i=1,2,3$, let ${f}_{i;n}$ be the sum of the weights of
the $\mathcal{S}_i$-nonnegative paths with length $n$ for $n\geq 1$
and ${f}_{i;0}=1$. It is easy to see
\begin{eqnarray}F_i(z)=\sum\limits_{n\geq 0}{f}_{i;n}z^n.\end{eqnarray}

\section{The Chung-Feller property of Dyck type}
In this section, we will prove the Chung-Feller Theorem of Dyck type
for Classes $1,2,3$. For $i=1,2,3$, let $\mathscr{P}_i$ be the set
of all the pointed $\mathcal{S}_i$-nonnegative path in which the
length of the final step is no less than $1$ and define the
generating function
$$P_i(y,z)=\sum\limits_{\dot{L}\in
\mathscr{P}_i}w(L)y^{p(\dot{L})}z^{l(L)}.$$

\begin{lem}\label{dycktypegeneragtingpoint} For Classes $1,2,3,$ we
have\\
 (1) $P_1(y,z)=1+\left(\sum\limits_{i\in
B}b_iz^i\sum\limits_{j=0}^{i-1}y^{j}\right)F_1(z)+\left(\sum\limits_{i\in
A}a_iz^i\sum\limits_{j=0}^{i-2}y^{j}\right)[F_1(z)]^2$\\
(2) $ P_2(y,z)=1+\left(\sum\limits_{i\in
B}b_iz^i\sum\limits_{j=0}^{i-1}y^{j}\right)F_2(z)+\left(\sum\limits_{i\in
A}a_iz^{i+1}\sum\limits_{j=0}^{i-1}y^{j}\right)[F_2(z)]^2$\\
(3) $ P_3(y,z)=1+\left(\sum\limits_{i\in
B}b_iz^i\sum\limits_{j=0}^{i-1}y^{j}\right)F_3(z).$
\end{lem}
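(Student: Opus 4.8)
The plan is to mirror the decomposition already used in Lemma~\ref{Dycktypegenerating}, but now tracking the extra marked point via the variable $y$. Recall that an element of $\mathscr{P}_i$ is a \emph{pointed} $\mathcal{S}_i$-nonnegative path $\dot{L}=[L,(x_m-2j,0)]$ (or $x_m-j$ for Class~$2$) with $0\le j\le l(s_m)-1$ and $p(\dot{L})=j$, where $s_m$ is the final step and $l(s_m)\ge 1$. So the generating function $P_i(y,z)$ should be thought of as: sum over nonnegative paths $L$ with $l(s_m)\ge 1$, each contributing $w(L)z^{l(L)}$ times $\sum_{j=0}^{l(s_m)-1}y^j$, i.e.\ the marked point ranges over the $l(s_m)$ admissible positions determined by the \emph{final} step. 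The empty path contributes the constant $1$. The whole difficulty is therefore bookkeeping where the final step lives after we peel off the first step.

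First I would split $\mathscr{P}_i$ according to the first step $s$ of $L$, exactly as in the previous lemma. \textbf{Class 1:} If $s=(2i,0)$ with $i\in B$, write $L=sR$ with $R\in\mathscr{N}_1$. If $R$ is nonempty its final step is that of $L$, so the marking lives entirely inside $R$ and one recovers a factor $P_1(y,z)$ for the $sR$ tail—but here is the subtlety: the generating function $P_1$ counts paths whose final step has $l\ge 1$, whereas $F_1$ counts all nonnegative paths. One must instead argue directly: the contribution of the case ``first step $=(2i,0)$, final step also $=(2i,0)$ (i.e.\ $R=\emptyset$)'' is $\sum_{i\in B}b_i z^i\sum_{j=0}^{i-1}y^j$, and when $R\ne\emptyset$ the final step is governed by $R$, giving $\left(\sum_{i\in B}b_i z^i\right)\cdot(\text{tail})$. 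The cleanest route is to observe that the marked point depends only on the final step, so $P_i(y,z)=\sum_{L}w(L)z^{l(L)}\left(\sum_{j=0}^{l(s_m)-1}y^j\right)$ factors as a sum over nonnegative paths where the final-step weight $w(s_m)z^{l(s_m)}$ is replaced by $w(s_m)z^{l(s_m)}\sum_{j=0}^{l(s_m)-1}y^j$. Thus $P_i$ is obtained from $F_i$ by a ``last-step substitution.'' I would make this the organizing principle.

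Concretely, I would prove that $P_i(y,z)$ is exactly what you get by taking the functional equation for $F_i$ from Lemma~\ref{Dycktypegenerating} and replacing, in each term, the final-step factor $z^{l(s)}$ by $z^{l(s)}\sum_{j=0}^{l(s)-1}y^j$, while leaving the remaining (non-final) structure as plain $F_i(z)$. For Class~$1$: a nonempty nonnegative path either ends with a flat step $(2i,0)$—contributing $\sum_{i\in B}b_i z^i\sum_{j=0}^{i-1}y^j$ times the $F_1(z)$ that precedes it—or ends with the descent $(2i-1,-1)$ closing a first-return block $sRtQ$, where now the final step $t$ carries $l(t)=i-1$ and hence $\sum_{j=0}^{i-2}y^j$, multiplied by $[F_1(z)]^2$ for the two interior nonnegative pieces $R,Q$. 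Adding the empty-path term $1$ yields formula~(1). Class~$2$ is identical except $\mathcal{S}_A=\{(i,-1)\}$, so the descent has length $i$ giving $\sum_{j=0}^{i-1}y^j$ and the block weight $z^{i+1}$ (matching the $z^{i+1}$ in $F_2$), yielding formula~(2). For Class~$3$, the descent step $(1,-2i+1)$ has length $l(t)=0$, so it can \emph{never} be a legal final step (the definition of $\mathscr{M}_i$ and $\mathscr{P}_i$ requires $l(s_m)\ge 1$); hence the only admissible final steps are the flat steps $(2i,0)$, and the entire ``Case~II'' contribution to the pointed generating function vanishes. This is precisely why (3) has no $[F_3(z)]^{i+1}$ term and reduces to $1+\left(\sum_{i\in B}b_i z^i\sum_{j=0}^{i-1}y^j\right)F_3(z)$.

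The main obstacle I anticipate is justifying the ``last-step substitution'' rigorously rather than by analogy: one must verify that peeling off the \emph{first} step never disturbs which step is last (true when the remaining tail is nonempty) and carefully handle the degenerate tails where the first step \emph{is} the final step. For Class~$3$ the key conceptual point—easy to overlook—is that an up-step count $[F_3]^{i+1}$ arises in $F_3$ from an interior block structure, but since the closing descent has length $0$ it contributes no marked positions, so no analogue appears in $P_3$; I would state and use the constraint $l(s_m)\ge 1$ explicitly to kill that term. Once the final-step/first-step interaction is pinned down, the three identities follow by substituting the geometric-sum factor $\sum_{j=0}^{l-1}y^j$ for each potential terminal step into the already-established equations of Lemma~\ref{Dycktypegenerating}.
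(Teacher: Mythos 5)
Your proposal is correct and follows essentially the same route as the paper: condition on the \emph{final} step of the nonnegative path, so that a terminal flat step $(2i,0)$ contributes $\bigl(\sum_{i\in B}b_iz^i\sum_{j=0}^{i-1}y^j\bigr)F_i(z)$, a terminal descent splits the path at the rightmost step leaving the $x$-axis into two nonnegative pieces giving the $[F_i(z)]^2$ term, and for Class $3$ the descent term disappears because $l((1,-2i+1))=0$ violates the requirement $l(s_m)\ge 1$ (exactly the paper's argument). Only a cosmetic remark: your block should be written in the order $QtRs$ (prefix, up-step, elevated piece, final descent) rather than ``$sRtQ$,'' which is the first-return order of Lemma 2.3, but your description of the pieces makes clear you mean the former.
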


\begin{proof} (1) Given a path $\dot{L}\in \mathscr{P}_1$ and $\dot{L}\neq
\emptyset$, we suppose that $s$ is the final step of $\dot{L}$ and
$(x,0)$ is the final point which the path $\dot{L}$ reach. Then
$\dot{L}=[L,(x-2j,0)]$ for some $0\leq j\leq l(s)-1$. We discuss the
following two cases:

{\it Case I.} $s=(2i,0)$ for some $i\in B$. We can decompose the
path $L$ into $Rs$, where $R\in\mathscr{N}_1$. Note that $l(s)=i$ ,
$w(s)=b_i$ and $j\in\{0,1,\ldots,i-1\}$. This provide the term
$\left(\sum\limits_{i\in
B}b_iz^i\sum\limits_{j=0}^{i-1}y^{j}\right)F_1(z)$.

{\it Case II.} $s=(2i-1,-1)$ for some $i\in A$ and $i\geq 2$. Let
$t$ be the right-most step leaving the $x$-axis. We can decompose
the path $L$ into $QtRs$, where $R,Q\in\mathscr{N}_1$ and $t=(1,1)$.
Note that $l(t)=1$, $w(t)=1$, $l(s)=i-1$, $w(s)=a_i$ and
$j\in\{0,1,\ldots,i-2\}$. This provide the term
$\left(\sum\limits_{i\in
A}a_iz^i\sum\limits_{j=0}^{i-2}y^{j}\right)[F_1(z)]^2$.

(2) The proof is similar to that of (1).

(3) For any $\dot{L}\in\mathscr{P}_3$, suppose $s$ is the final step
of $\dot{L}$. Clearly, $s\neq (1,1)$. Furthermore, we have
$s=(2i,0)$ for some $i\in B$ since $l(s)\geq 1$. Using the similar
method as Case I in (1), we can obtain the identity as desired.

\end{proof}

Now, we turn to $\mathcal{S}_i$-path for Classes $i=1,2,3$. Let
$\mathscr{L}_i$ be the set of all the $\mathcal{S}_i$-paths. Define
the generating functions
$$G_i(y,z)=\sum\limits_{L\in\mathscr{L}_i}w(L)y^{\bar{l}(L)}z^{l(L)}.$$

\begin{lem}\label{dycktypegeneratingnofinal}
For Classes $1,2,3$, we have
\begin{eqnarray*}(1)~G_1(y,z)&=&1+\left(\sum\limits_{i\in
B}b_iy^iz^i\right)G_1(y,z)+\left(\sum\limits_{i\in
A}a_iy^iz^i\right)F_1(yz)G_1(y,z)\\
&&+\left(\sum\limits_{i\in
A}a_iy^{i-1}z^i\right)F_1(z)G_1(y,z),\\
Equivalently,~G_1(y,z)&=&\frac{1}{1-\sum\limits_{i\in
B}b_iy^iz^i-\left(\sum\limits_{i\in
A}a_iy^iz^i\right)F_1(yz)-\left(\sum\limits_{i\in
A}a_iy^{i-1}z^i\right)F_1(z)}\\
(2)~G_2(y,z)&=&1+\left(\sum\limits_{i\in
B}b_iy^iz^i\right)G_2(y,z)+\left(\sum\limits_{i\in
A}a_iy^{i+1}z^{i+1}\right)F_2(yz)G_2(y,z)\\
&&+\left(\sum\limits_{i\in
A}a_iy^{i}z^{i+1}\right)F_2(z)G_2(y,z),\\
Equivalently,~G_2(y,z)&=&\frac{1}{1-\sum\limits_{i\in
B}b_iy^iz^i-\left(\sum\limits_{i\in
A}a_iy^{i+1}z^{i+1}\right)F_2(yz)-\left(\sum\limits_{i\in
A}a_iy^{i}z^{i+1}\right)F_2(z)}\\
(3)~G_3(y,z)&=&1+\left(\sum\limits_{i\in
A}a_iz^{i}\sum\limits_{j=0}^{i}y^{i-j}[F_3(yz)]^{i-j}[F_3(z)]^j\right)G_3(y,z)\\
&&+\left(\sum\limits_{i\in B}b_iy^iz^i\right)G_3(y,z).\\
Equivalently,~ G_3(y,z)&=&\frac{1}{1-\sum\limits_{i\in
B}b_iy^iz^i-\sum\limits_{i\in
A}a_iz^{i}\sum\limits_{j=0}^{i}y^{i-j}[F_3(yz)]^{i-j}[F_3(z)]^j}
\end{eqnarray*}
\end{lem}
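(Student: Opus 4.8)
The plan is to derive each of the three functional equations by a first-step decomposition of an arbitrary $\mathcal{S}_i$-path, exactly paralleling the proof of Lemma~\ref{Dycktypegenerating} but now tracking the non-positive length statistic $\bar{l}(L)$ through the extra variable $y$. The key observation is that whether the first step lives above or on/below the $x$-axis determines whether its contribution, and the contributions of the sub-paths it creates, get weighted by $y$. So the whole argument rests on bookkeeping the heights of steps correctly under concatenation.

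First I would fix an arbitrary nonempty path $L\in\mathscr{L}_i$ and look at its first step $s$. For Classes~1 and 2 there are three scenarios. If $s\in\mathcal{S}_B$ is a level step $(2i,0)$ (resp.\ $(i,0)$), it sits at height $0$, so its length $i$ is non-positive; the remainder $R$ is again an arbitrary path and every step of $R$ keeps its original height, so $R$ contributes $G_i(y,z)$. This yields the term $\bigl(\sum_{i\in B}b_iy^iz^i\bigr)G_i(y,z)$. If $s=(1,1)$, the path rises; let $t$ be the first down-step returning to the $x$-axis, writing $L=sR t Q$. Here the elevated excursion $R$ lives entirely at height $\geq 1$ so none of its steps count toward $\bar{l}$: its heights are shifted up by one, which is exactly why $R$ contributes $F_i(yz)$ in disguise—wait, the key point is that $R$ is a \emph{nonnegative} path relative to the raised floor, so it contributes the \emph{shifted} generating function. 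I must separate the two flavours: one excursion opened by $(1,1)$ where the returning down-step $t$ lands the first sub-block at height $\geq 1$ (giving $F_i(yz)$, since above the axis lengths are still tracked by $z$ but are $y$-weighted or not according to convention) and the post-return tail $Q$ which is an arbitrary path contributing $G_i(y,z)$. The arithmetic of the exponents of $y$ and $z$ on the down-step $t$—namely $y^iz^i$ versus $y^{i-1}z^i$ in Class~1—is precisely what encodes whether $t$'s own length counts as non-positive, and this is where the two summands $F_i(yz)$ and $F_i(z)$ arise from the two ways an excursion can be structured.

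For Class~3 the down-steps are the tall steps $(1,-2i+1)$ and the decomposition mirrors Case~II of Lemma~\ref{Dycktypegenerating}(3): an excursion opened by $i$ successive first-rises $s_1,\dots,s_i$ to heights $1,\dots,i$, closed by a single $t=(1,-2i+1)$ of length $0$, splits into $i+1$ sub-paths $R_1,\dots,R_i,Q$. Since $t$ has length $0$ it contributes nothing to $\bar{l}$, and each of the $i$ inner nonnegative sub-paths $R_j$ sits at a height $\geq 1$ except for the portion of the excursion that dips to the axis—this is the source of the inner sum $\sum_{j=0}^{i}y^{i-j}[F_3(yz)]^{i-j}[F_3(z)]^j$, where the index $j$ counts how many of the sub-blocks lie at (or are pushed to) the baseline and hence stay $y$-unweighted versus $y$-weighted. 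The tail $Q$ again contributes $G_3(y,z)$.

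The main obstacle, and the step I expect to demand the most care, is the exponent bookkeeping for $y$ in the elevated excursions: an excursion that rises and returns does not simply inherit $F_i(z)$ or $F_i(yz)$, but a product over its internal structure reflecting exactly which sub-paths touch the non-positive region. Getting the split into the $F_i(yz)$ term versus the $F_i(z)$ term right in Classes~1 and~2—and the full inner sum with the correct powers $y^{i-j}$ in Class~3—is the crux; once the decomposition is set up so that every step's height (hence its $y$-weight status) is unambiguous, the three functional equations follow by summing over the first-step cases, and the equivalent closed forms come from solving each linear equation in $G_i(y,z)$.
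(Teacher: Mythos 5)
Your overall plan (first-step decomposition of an arbitrary $\mathcal{S}_i$-path, tracking heights to decide which steps are $y$-weighted) is the same as the paper's, but the execution has a genuine gap: for Classes 1 and 2 your case analysis never treats the case where the first step is a down-step $s\in\mathcal{S}_A$, i.e. the path begins by going \emph{below} the axis, and that is exactly the case that produces the $F_i(yz)$ term. A path in $\mathscr{L}_1$ need not be nonnegative, so its first step can be $(2i-1,-1)$; the paper then takes $t=(1,1)$ to be the left-most step returning to the axis and decomposes $L=s\overline{Q}tR$, where $\overline{Q}$ lies strictly below the axis (its reversal is a nonnegative path), so \emph{every} step of $\overline{Q}$ counts toward $\bar{l}$ and the block contributes $F_1(yz)$, while $s$ and $t$ together contribute $a_iy^iz^i$ since both have height $\leq 0$. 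Your attempt instead tries to extract both summands $F_i(yz)$ and $F_i(z)$ from excursions opened by $(1,1)$; but such an excursion lies entirely at height $\geq 1$, none of its steps count toward $\bar{l}$, and it can only ever give $F_i(z)$ --- your own mid-sentence correction (``wait\dots'') and the claim that the elevated excursion ``contributes $F_i(yz)$ in disguise'' show this confusion. With only your two cases the derived equation would be $G_1(y,z)=1+\bigl(\sum_{i\in B}b_iy^iz^i\bigr)G_1(y,z)+\bigl(\sum_{i\in A}a_iy^{i-1}z^i\bigr)F_1(z)G_1(y,z)$, which is not the statement.

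The same gap recurs in Class 3. You place all $i$ rises \emph{before} the closing step $t$, which is the decomposition of the nonnegative-path Lemma 2.3(3), not of a general path in $\mathscr{L}_3$. In the paper, $t=(1,-2i+1)$ is the left-most step that \emph{crosses} the axis, landing at height $-i+j$ for some $0\leq j\leq i$; there are then $j$ rise steps before $t$ whose blocks lie above the axis (giving $[F_3(z)]^j$) and $i-j$ rise steps after $t$ whose blocks lie at height $\leq 0$ and are read in reverse as nonnegative paths (giving $y^{i-j}[F_3(yz)]^{i-j}$, the $y^{i-j}$ coming from those below-axis unit rises). So the summation index $j$ records where the crossing step lands, not ``how many sub-blocks are pushed to the baseline''; without identifying this crossing structure, the inner sum $\sum_{j=0}^{i}y^{i-j}[F_3(yz)]^{i-j}[F_3(z)]^j$ cannot be derived.
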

\begin{proof} (1) Given a path ${L}\in \mathscr{L}_1$ and ${L}\neq
\emptyset$, we suppose that $s$ is the first step of ${L}$. We
discuss the following three cases:

{\it Case I.} $s=(2i,0)$ for some $i\in B$. We can decompose the
path $L$ into $sR$, where $R\in\mathscr{L}_1$. Note that $l(s)=i$ ,
$w(s)=b_i$ and $h(s)=0$. This provide the term
$\left(\sum\limits_{i\in B}b_iy^iz^i\right)G_1(y,z)$.

{\it Case II.} $s=(2i-1,-1)$ for some $i\in A$. Let $t=(1,1)$ be the
left-most step returning to the $x$-axis. We can decompose the path
$L$ into $s\overline{Q}tR$, where $R\in\mathscr{L}_1$, and if we
view $\overline{Q}$ as a word $s_1s_2\ldots s_r$ of the steps in
$\mathcal{S}_1$, then $Q=s_{r}s_{r-1}\ldots s_1\in\mathscr{N}_1$.
Note that $l(t)=1$, $w(t)=1$, $l(s)=i-1$, $w(s)=a_i$, $h(s)=-1$ and
$h(t)=0$. This provide the term $\left(\sum\limits_{i\in
A}a_iy^iz^i\right)F_1(yz)G_1(y,z)$.

{\it Case III.} $s=(1,1)$. Let $t=(2i-1,-1)$ be the first step
returning to the $x$-axis. We can decompose the path $L$ into
$sQtR$, where $R\in\mathscr{L}_1$ and $Q\in\mathscr{N}_1$. Note that
$l(t)=i-1$, $w(t)=a_i$, $l(s)=1$, $w(s)=1$, $h(s)=1$ and $h(t)=0$.
This provide the term $\left(\sum\limits_{i\in
A}a_iy^{i-1}z^i\right)F_1(z)G_1(y,z)$.

(2) The proof is similar to that of (1).

(3) Given a path $L\in \mathscr{L}_3$ and $L\neq \emptyset$, we
suppose that $s$ is the first step of ${L}$. We discuss the
following two cases:

{\it Case I.} $s=(2i,0)$ for some $i\in B$. Similar to Case I in
(1), we can obtain the term $\left(\sum\limits_{i\in
B}b_iy^iz^i\right)G_3(y,z)$.

{\it Case II.} $s\notin\mathcal{S}_{B}$. Let $t=(1,-2i+1)$ be the
left-most step which passes the $x$-axis and has height $-i+j$ for
some $i\in A$ and $0\leq j\leq i$. Furthermore, let $s_r$ be the
first step $(1,1)$ with height $r$ for any $1\leq r\leq j$. Let
$u_r$ be the first step $(1,1)$ at the right of $t$ with height
$-r+1$ for any $1\leq r\leq i-j$. So, we can decompose the path $L$
into $s_1R_1s_2R_2\ldots
s_jR_jt\overline{Q}_1u_{i-j}\overline{Q}_2u_{i-j-1}\ldots
\overline{Q}_{i-j}u_1T$, where $R_r\in\mathscr{N}_3$ for all $1\leq
r\leq j$, $T\in\mathscr{L}_3$, and if we view $\overline{Q}_r$ as a
word $s'_{r,1}s'_{r,2}\ldots s'_{r,k}$ of the step in
$\mathcal{S}_3$, then $Q_r=s'_{r,k}s'_{r,k-1}\ldots
s'_{r,1}\in\mathscr{N}_3$ for all $1\leq r\leq i-j$. Note that
$l(s_r)=1$, $w(s_r)=1$, $h(s_r)\geq 1$ for all $1\leq r\leq j$,
$l(t)=0$, $w(t)=a_i$, $h(t)\leq 0$, $l(u_r)=1$, $w(u_r)=1$ and
$h(u_r)\leq 0$ for all $1\leq r\leq i-j$. This provide the term
$\left(\sum\limits_{i\in
A}a_iz^{i}\sum\limits_{j=0}^{i}y^{i-j}[F_3(yz)]^{i-j}[F_3(z)]^j\right)G_3(y,z)$.
\end{proof}

Recall that $\mathscr{M}_i^1$ is the set of all the pointed
$(\mathcal{S}_i,1)$-path in which the length of the final step is no
less than $1$  and
$D_i(y,z)=\sum\limits_{\dot{L}\in{\mathscr{M}_i^1}}w(L)y^{\overline{lp}(\dot{L})}z^{l(L)-1}$for
$i=1,2,3$.

\begin{lem}\label{dycktypegeneratingtheorem} For Classes $i=1,2,3,$ we
have
\begin{eqnarray*}D_i(y,z)=G_i(y,z)P_i(y,z).
\end{eqnarray*}
\end{lem}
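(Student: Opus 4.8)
The plan is to prove the identity by a single ``last passage'' decomposition that realizes the product $G_i(y,z)P_i(y,z)$ combinatorially. Given a pointed path $\dot{L}\in\mathscr{M}_i^1$, write $L=s_1s_2\ldots s_m$ with associated points $(x_0,y_0),\ldots,(x_m,y_m)$; since $L$ is an $(\mathcal{S}_i,1)$-lattice path we have $y_0=0$ and $y_m=1$. First I would locate the split step: let $(x_r,y_r)$ be the last point of $L$ with $y_r\le 0$ (this exists because $y_0=0$), so that $y_{r+1},\ldots,y_m\ge 1$. The key observation is that in every one of the three classes the unique height-increasing step is $(1,1)$, which raises the height by exactly $1$; hence the step $s_{r+1}$ carrying the path from height $y_r\le 0$ to height $y_{r+1}\ge 1$ must satisfy $y_r=0$, $y_{r+1}=1$ and $s_{r+1}=(1,1)$. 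This forces the factorization $L=L_1\,(1,1)\,L_2'$, where $L_1=s_1\ldots s_r\in\mathscr{L}_i$ is an arbitrary $\mathcal{S}_i$-path ending on the $x$-axis, and $L_2'=s_{r+2}\ldots s_m$ runs from height $1$ back to height $1$ while staying weakly above the line $y=1$.

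Next I would transfer the data to the two factors. Shifting $L_2'$ down by $1$ turns it into an $\mathcal{S}_i$-nonnegative path $L_2\in\mathscr{N}_i$ (steps are displacement vectors, so a vertical translation changes neither the step sequence, the weights, nor the lengths). The final step $s_m$ of $L$ is the final step of $L_2$ and has $l(s_m)\ge 1$; attaching the same marking parameter $j=p(\dot{L})$ to it produces a pointed nonnegative path $\dot{L}_2\in\mathscr{P}_i$ with $p(\dot{L}_2)=p(\dot{L})$. Conversely, from any pair $(L_1,\dot{L}_2)\in\mathscr{L}_i\times\mathscr{P}_i$ one reassembles $\dot{L}$ by inserting a $(1,1)$ step between $L_1$ and the upward shift of $L_2$; because $L_1$ ends at height $0$ and the shifted $L_2$ never drops below height $1$, the endpoint of $L_1$ is automatically the last point of the reassembled path at height $\le 0$, so the two maps are mutually inverse. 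This establishes a weight-preserving bijection $\dot{L}\leftrightarrow(L_1,\dot{L}_2)$.

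It then remains to check that the statistics split additively. For the weight, $w(L)=w(L_1)\,w((1,1))\,w(L_2)=w(L_1)w(L_2)$ since the inserted up-step has weight $1$. For the length, $l(L)=l(L_1)+1+l(L_2)$, so the exponent $l(L)-1$ appearing in $D_i$ equals $l(L_1)+l(L_2)$. For the non-positive pointed length, I would use that every step of $L_1$ keeps its height within $L$, that the inserted step ends at height $1$, and that every point of $L_2'$ has height $\ge 1$; hence only the steps of $L_1$ can end at height $\le 0$ and $\bar{l}(L)=\bar{l}(L_1)$. Combined with $p(\dot{L})=p(\dot{L}_2)$ this gives $\overline{lp}(\dot{L})=\bar{l}(L)+p(\dot{L})=\bar{l}(L_1)+p(\dot{L}_2)$. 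Summing $w(L)\,y^{\overline{lp}(\dot{L})}z^{l(L)-1}$ over $\mathscr{M}_i^1$ and using the bijection to factor each term as $\bigl(w(L_1)y^{\bar{l}(L_1)}z^{l(L_1)}\bigr)\bigl(w(L_2)y^{p(\dot{L}_2)}z^{l(L_2)}\bigr)$ yields exactly $G_i(y,z)P_i(y,z)$.

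I expect the only delicate point to be the identification of the split step. The argument must use that the sole ascending step is $(1,1)$ with increment exactly $1$, so that the transition from ``height $\le 0$'' to ``height $\ge 1$'' cannot be skipped over; this is what guarantees that the last non-positive point sits at height exactly $0$ and is immediately followed by an up-step, even in Class $3$ where the descending steps $(1,-2i+1)$ may make arbitrarily large downward jumps. Once this is secured, the boundary cases ($L_1$ empty, $L_2$ empty, or both, the last giving the term $\bar{f}_{i;0,0}=1$) fall out directly, and the remaining verifications are routine.
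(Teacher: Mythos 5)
Your proof is correct and is essentially the paper's own argument: your ``last point at height $\le 0$'' split step coincides with the paper's ``right-most step $(1,1)$ leaving the $x$-axis and reaching the line $y=1$,'' giving the same decomposition $\dot{L}=R\,(1,1)\,\dot{Q}$ with $R\in\mathscr{L}_i$ and $\dot{Q}\in\mathscr{P}_i$. You simply make explicit the bijectivity and the splitting of the statistics $w$, $l$, $\bar{l}$, $p$, which the paper leaves to the reader.
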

\begin{proof} For any $i=1,2,3$, let $\dot{L}\in \mathscr{M}_i^1$.
Let $s$ be the right-most step $(1,1)$ leaving $x$-axis and reaching
the line $y=1$. We can decompose the path $\dot{L}$ into
$Rs\dot{Q}$, where $R\in\mathscr{L}_i$ and
$\dot{Q}\in\mathscr{P}_i$. Hence, $D_i(y,z)=G_i(y,z)P_i(y,z).$
\end{proof}

Now, we are in a position to prove the Chung-Feller theorem of Dyck
type for Classes 1,2,3.
\begin{thm}\label{dycktypechungfeller} For Classes $i=1,2,3,$ let
 $\bar{f}_{i;n,m}$ be the sum of the weights of the pointed
$(\mathcal{S}_i,1)$-lattice paths which \\
(a.) have length $n+1$,\\
(b.) have non-positive pointed length $m$,\\
(c.) have the length of the final step no less than $1$.\\
Let ${f}_{i;n}$ be the sum of the weights of the
$\mathcal{S}_i$-nonnegative paths with length $n$. Then
$\bar{f}_{i;n,m}$ has the Chung-Feller property of Dyck type, i.e.,
$\bar{f}_{i;n,m}={f}_{i;n}$.
\end{thm}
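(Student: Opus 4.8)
The plan is to recast the combinatorial statement as a single generating-function identity and then verify that identity algebraically, one class at a time. Expanding $D_i(y,z)=\sum_{n\geq 0}\sum_{m=0}^n\bar f_{i;n,m}y^mz^n$ and $F_i(z)=\sum_{n\geq 0}f_{i;n}z^n$, the claim $\bar f_{i;n,m}=f_{i;n}$ for all $0\leq m\leq n$ is equivalent, via $\sum_{m=0}^n y^m=\frac{1-y^{n+1}}{1-y}$, to the single identity
$$D_i(y,z)=\sum_{n\geq 0}f_{i;n}z^n\sum_{m=0}^n y^m=\frac{F_i(z)-yF_i(yz)}{1-y}.$$
By Lemma \ref{dycktypegeneratingtheorem} we have $D_i(y,z)=G_i(y,z)P_i(y,z)$, so it suffices to prove $(1-y)G_i(y,z)P_i(y,z)=F_i(z)-yF_i(yz)$ for $i=1,2,3$.

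To do this I would substitute the closed forms of $G_i$ from Lemma \ref{dycktypegeneratingnofinal} and of $P_i$ from Lemma \ref{dycktypegeneragtingpoint}. Writing $F=F_i(z)$ and $\widetilde F=F_i(yz)$, and abbreviating $\beta=\sum_{i\in B}b_iz^i$, $\alpha=\sum_{i\in A}a_iz^i$ together with their analogues $\beta_y,\alpha_y$ obtained by $z\mapsto yz$, each geometric factor $\sum_{j=0}^{k}y^j$ occurring in $P_i$ collapses to $\frac{y^{k+1}-1}{y-1}$, turning $P_i$ into a rational expression in $F$ with denominator $y-1$. The decisive simplification comes from the functional equations of Lemma \ref{Dycktypegenerating}, used in two forms: once as stated for $F$, and once with $z$ replaced by $yz$ for $\widetilde F$. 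These relations (for Classes $1,2$ the quadratic identities $\beta F+\alpha F^2=F-1$ and $\beta_y\widetilde F+\alpha_y\widetilde F^2=\widetilde F-1$) let me eliminate the higher-degree terms and, after the cross terms $\pm F\alpha_y\widetilde F$ cancel, reduce $(1-y)G_iP_i$ to exactly $F-y\widetilde F$.

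The main obstacle is Class $3$, where the exponent-dependent term $[F_3(z)]^{i+1}$ in Lemma \ref{Dycktypegenerating}(3) and the sum $\sum_{j=0}^i y^{i-j}[F_3(yz)]^{i-j}[F_3(z)]^j$ inside $G_3$ prevent a naive term-by-term match. The key step is to recognize that inner sum as a telescoping geometric progression, $\sum_{j=0}^i(y\widetilde F)^{i-j}F^j=\frac{(y\widetilde F)^{i+1}-F^{i+1}}{y\widetilde F-F}$; summing this against $a_iz^i$ and invoking $\sum_{i\in A}a_iz^iF^{i+1}=F-1-\beta F$ together with its $yz$-version collapses the entire $A$-contribution, so that $1/G_3$ simplifies to $\frac{(\beta_y-\beta)F+y-1}{\,y\widetilde F-F\,}$. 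Since here $P_3=1+\frac{\beta_y-\beta}{y-1}F$ gives $(1-y)P_3=-\bigl[(\beta_y-\beta)F+(y-1)\bigr]$, the two factors cancel and leave precisely $F-y\widetilde F$. This telescoping identification is the one nonroutine ingredient; the remaining manipulations for all three classes are bookkeeping with the functional equations.
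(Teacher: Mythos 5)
Your proposal is correct and follows essentially the same route as the paper: reduce via $D_i=G_iP_i$ to showing $D_i(y,z)=\frac{yF_i(yz)-F_i(z)}{y-1}$, then verify this by substituting the closed forms of $G_i$ and $P_i$, collapsing the geometric sums $\sum_j y^j$, and invoking the functional equations of Lemma \ref{Dycktypegenerating} for both $F_i(z)$ and $F_i(yz)$. The one place you go beyond the paper is Class $3$, which the paper dismisses with ``similarly'': your telescoping identification $\sum_{j=0}^i(y\widetilde F)^{i-j}F^j=\frac{(y\widetilde F)^{i+1}-F^{i+1}}{y\widetilde F-F}$, leading to $1/G_3=\frac{(\beta_y-\beta)F+y-1}{y\widetilde F-F}$ and the exact cancellation against $(1-y)P_3$, is a correct and worthwhile filling-in of that omitted case.
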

\begin{proof} First, we consider Class 1. By Lemmas 3.1,
3.2 and 3.3, we have
\begin{eqnarray*}D_1(y,z)&=&G_1(y,z)P_1(y,z)\\
&=&\frac{1+\left(\sum\limits_{i\in
B}b_iz^i\sum\limits_{j=0}^{i-1}y^{j}\right)F_1(z)+\left(\sum\limits_{i\in
A}a_iz^i\sum\limits_{j=0}^{i-2}y^{j}\right)[F_1(z)]^2}{1-\sum\limits_{i\in
B}b_iy^iz^i-\left(\sum\limits_{i\in
A}a_iy^iz^i\right)F_1(yz)-\left(\sum\limits_{i\in
A}a_iy^{i-1}z^i\right)F_1(z)}\\
&=&\frac{\left[1+\left(\sum\limits_{i\in
B}b_iz^i\sum\limits_{j=0}^{i-1}y^{j}\right)F_1(z)+\left(\sum\limits_{i\in
A}a_iz^i\sum\limits_{j=0}^{i-2}y^{j}\right)[F_1(z)]^2\right]F_1(yz)}{\left[1-\sum\limits_{i\in
B}b_iy^iz^i-\left(\sum\limits_{i\in
A}a_iy^iz^i\right)F_1(yz)-\left(\sum\limits_{i\in
A}a_iy^{i-1}z^i\right)F_1(z)\right]F_1(yz)}\\
&=&\frac{\left[1+\left(\sum\limits_{i\in
B}b_iz^i\sum\limits_{j=0}^{i-1}y^{j}\right)F_1(z)+\left(\sum\limits_{i\in
A}a_iz^i\sum\limits_{j=0}^{i-2}y^{j}\right)[F_1(z)]^2\right]F_1(yz)}{1-\left(\sum\limits_{i\in
A}a_iy^{i-1}z^i\right)F_1(z)F_1(yz)}
\end{eqnarray*}
since Lemma 2.3 tells us $F_1(yz)-\left(\sum\limits_{i\in
B}b_iy^iz^i\right)F_1(yz)-\left(\sum\limits_{i\in
A}a_iy^iz^i\right)[F_1(yz)]^2=1$. Note that
\begin{eqnarray*}&&\left[1+\left(\sum\limits_{i\in
B}b_iz^i\sum\limits_{j=0}^{i-1}y^{j}\right)F_1(z)+\left(\sum\limits_{i\in
A}a_iz^i\sum\limits_{j=0}^{i-2}y^{j}\right)[F_1(z)]^2\right]F_1(yz)\\
&=&\left[1+\left(\sum\limits_{i\in
B}b_iz^i\frac{y^i-1}{y-1}\right)F_1(z)+\left(\sum\limits_{i\in
A}a_iz^i\frac{y^{i-1}-1}{y-1}\right)[F_1(z)]^2\right]F_1(yz)\\
&=&\frac{1}{y-1}\left[y-1+\left(\sum\limits_{i\in
B}b_iz^i(y^i-1)\right)F_1(z)+\left(\sum\limits_{i\in
A}a_iz^i(y^{i-1}-1)\right)[F_1(z)]^2\right]F_1(yz)\\
&=&\frac{1}{y-1}\left[yF_1(yz)+\left(\sum\limits_{i\in
B}b_iz^iy^i\right)F_1(z)F_1(yz)+\left(\sum\limits_{i\in
A}a_iz^iy^{i-1}\right)[F_1(z)]^2F_1(yz)-F_1(z)F_1(yz)\right]
\end{eqnarray*}
Furthermore, since $\left(\sum\limits_{i\in
B}b_iz^iy^i\right)F_1(yz)=F_1(yz)-\left(\sum\limits_{i\in
A}a_iy^iz^i\right)[F_1(yz)]^2-1$, we
get\begin{eqnarray*}&&\left[1+\left(\sum\limits_{i\in
B}b_iz^i\sum\limits_{j=0}^{i-1}y^{j}\right)F_1(z)+\left(\sum\limits_{i\in
A}a_iz^i\sum\limits_{j=0}^{i-2}y^{j}\right)[F_1(z)]^2\right]F_1(yz)\\
&=&\frac{1}{y-1}\left[yF_1(yz)-\left(\sum\limits_{i\in
A}a_iy^iz^i\right)[F_1(yz)]^2F_1(z)-F_1(z)+\left(\sum\limits_{i\in
A}a_iz^iy^{i-1}\right)[F_1(z)]^2F_1(yz)\right]\\
&=&\frac{\left[yF_1(yz)-F_1(z)\right]\left[1-\left(\sum\limits_{i\in
A}a_iy^{i-1}z^i\right)F_1(z)F_1(yz)\right]}{y-1}.
\end{eqnarray*}Hence, \begin{eqnarray*}D_1(y,z)&=&\frac{yF_1(yz)-F_1(z)}{y-1}\\
&=&\frac{y\sum\limits_{n\geq 0}f_{1;n}y^nz^n-\sum\limits_{n\geq
0}f_{1;n}z^n}{y-1}\\
&=&\sum\limits_{n\geq 0}f_{1;n}z^n\frac{y^{n+1}-1}{y-1}\\
&=&\sum\limits_{n\geq 0}f_{1;n}z^n\sum\limits_{m=0}^ny^m\\
&=&\sum\limits_{n\geq 0}\sum\limits_{m=0}^nf_{1;n}y^mz^n.
\end{eqnarray*}
This implies $\bar{f}_{1;n,m}=f_{1;n}$ for all $0\leq m\leq n$.
Similarly, we can prove the theorems for Classes $i=2,3$.\end{proof}
\begin{cor}(Chung-Feller.) Let $\mathcal{S}=\{(1,1),(1,-1)\}$, $w(s)=1$ for any $s\in\mathcal{S}$,
$l((1,1))=1$ and $l((1,-1))=0$. Then the number of the
$(\mathcal{S},0)$-lattice path with length $n$ and non-positive
length $m$ is the $n$-th Catalan number.
\end{cor}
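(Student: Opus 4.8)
The plan is to obtain the classical Chung-Feller theorem as the special case of Theorem \ref{dycktypechungfeller} for Class 1 in which $A=\{1\}$, $B=\emptyset$, and $a_1=1$. First I would check that this specialization reproduces exactly the data of the corollary: with $A=\{1\}$ we have $\mathcal{S}_A=\{(2\cdot 1-1,-1)\}=\{(1,-1)\}$, with $B=\emptyset$ we have $\mathcal{S}_B=\emptyset$, so $\mathcal{S}_1=\{(1,1),(1,-1)\}=\mathcal{S}$; moreover $l((1,-1))=1-1=0$, $l((1,1))=1$, and every weight equals $1$ once we set $a_1=1$. Thus the weighted sums of Theorem \ref{dycktypechungfeller} become plain cardinalities. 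Next I would identify $f_{1;n}$ with the Catalan number: by Lemma \ref{Dycktypegenerating}(1) the functional equation collapses to $F_1(z)=1+z[F_1(z)]^2$, which is precisely the defining equation of $C(z)$, so $f_{1;n}=c_n$.

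The crux is to translate the pointed $(\mathcal{S}_1,1)$-path count $\bar{f}_{1;n,m}$ appearing in the theorem into the plain $(\mathcal{S},0)$-path count of the corollary. The key observation is that the final-step condition $l(s_m)\ge 1$ forces $s_m=(1,1)$, the only step of positive length, whence $l(s_m)=1$ and the marked point must have $j=0$; consequently $p(\dot{L})=0$ and the pointing is trivial. Deleting this terminal $(1,1)$ step then gives a bijection between pointed $(\mathcal{S}_1,1)$-paths of length $n+1$ with final step of length $\ge 1$ and $(\mathcal{S},0)$-lattice paths of length $n$: the deleted step runs from height $0$ to height $1$, so its removal returns the endpoint to the line $y=0$, and conversely appending $(1,1)$ to any $(\mathcal{S},0)$-path of length $n$ produces a unique such pointed path.

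Under this bijection I would verify that $\overline{lp}(\dot{L})=\bar{l}(L)+p(\dot{L})=\bar{l}(L)$ equals the non-positive length of the resulting $(\mathcal{S},0)$-path, because the deleted terminal step ends at height $1>0$ and so never contributed to $\bar{l}$. With these identifications in place the conclusion is immediate: Theorem \ref{dycktypechungfeller} gives $\bar{f}_{1;n,m}=f_{1;n}=c_n$, and the left-hand side equals the number of $(\mathcal{S},0)$-lattice paths of length $n$ with non-positive length $m$.

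The only genuine obstacle is the bookkeeping in this last translation, namely confirming that the pointing truly degenerates (so that $\bar{f}_{1;n,m}$ literally counts unpointed $(\mathcal{S},0)$-paths) and that the two notions of ``non-positive length'' coincide under removal of the terminal up-step. Once that is checked, independence of $m$ together with the value $c_n$ follows at once. As a sanity check, summing the count over $m=0,\dots,n$ yields $(n+1)c_n=\binom{2n}{n}$, the total number of free lattice paths from $(0,0)$ to $(2n,0)$, exactly as it should.
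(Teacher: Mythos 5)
Your proposal is correct and follows essentially the same route as the paper's own proof: invoke Theorem 3.4 for Class 1 (with $A=\{1\}$, $B=\emptyset$, $a_1=1$, so $\mathcal{S}_1=\mathcal{S}$), translate pointed $(\mathcal{S},1)$-paths into $(\mathcal{S},0)$-paths by deleting the final step, which is forced to be $(1,1)$ since $l((1,-1))=0<1$, and identify $f_{1;n}$ with the $n$-th Catalan number via the equation $F_1(z)=1+z[F_1(z)]^2$ from Lemma 2.3. Your write-up is in fact more careful than the paper's, since you explicitly check that the pointing degenerates ($p(\dot{L})=0$) and that the non-positive length is unchanged by removing the terminal up-step, details the paper passes over silently.
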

\begin{proof} For any a pointed
$(\mathcal{S},1)$-lattice path, we suppose that $s$ is the final
step in this path. Then $s=(1,1)$ since $l((1,-1))=0<1$. If we
delete the final step of this path and erase the marked point, we
will obtain an $(\mathcal{S},0)$-lattice path with length $n$ and
non-positive length $m$. By Theorem 3.4, the number of the pointed
$(\mathcal{S},1)$-lattice path with length $n+1$ and non-positive
pointed length $m$ in which the length of the final step is no less
than $1$  is equal to  the number of the $\mathcal{S}$-nonnegative
paths with length $n$. By Lemma 2.3, we have $F_1(z)=1+z[F_1(z)]^2$
since $\mathcal{S}=\{(1,1),(1,-1)\}$, $w(s)=1$ for any
$s\in\mathcal{S}$, $l((1,1))=1$ and $l((1,-1))=0$. Hence, the number
of the $\mathcal{S}_1$-nonnegative paths with length $n$ is the
$n$-th Catalan number. This complete the proof.
\end{proof}

\begin{cor}\label{mexample} Let $\mathcal{S}=\{(1,1),(1,-1),(1,0)\}$, $w(s)=1$ and $l(s)=1$ for any $s\in\mathcal{S}$.
 Then the number of the $(\mathcal{S},1)$-lattice path
with length $n+1$ and non-positive length $m$ is the $n$-th Motzkin
number.
\end{cor}
\begin{proof} For any a pointed
$(\mathcal{S},1)$-lattice path, we suppose that the final point in
this path is $(x,1)$. Then the marked point must be $(x,0)$ since
$l(s)=1$ for any $s\in\mathcal{S}$. So, we can erase the marked
point. By Theorem 3.4, the number of the pointed
$(\mathcal{S},1)$-lattice path with length $n+1$ and non-positive
pointed length $m$ is equal to  the number of the
$\mathcal{S}$-nonnegative paths with length $n$. By Lemma 2.3, we
have $F_2(z)=1+zF_2(z)+z^2[F_2(z)]^2$ since
$\mathcal{S}=\{(1,1),(1,-1),(1,0)\}$, $w(s)=1$ and $l(s)=1$ for any
$s\in\mathcal{S}$. Hence, the number of the
$\mathcal{S}$-nonnegative paths with length $n$ is the $n$-th
Motzkin number. This complete the proof.
\end{proof}
\begin{center}
\includegraphics[width=10cm]{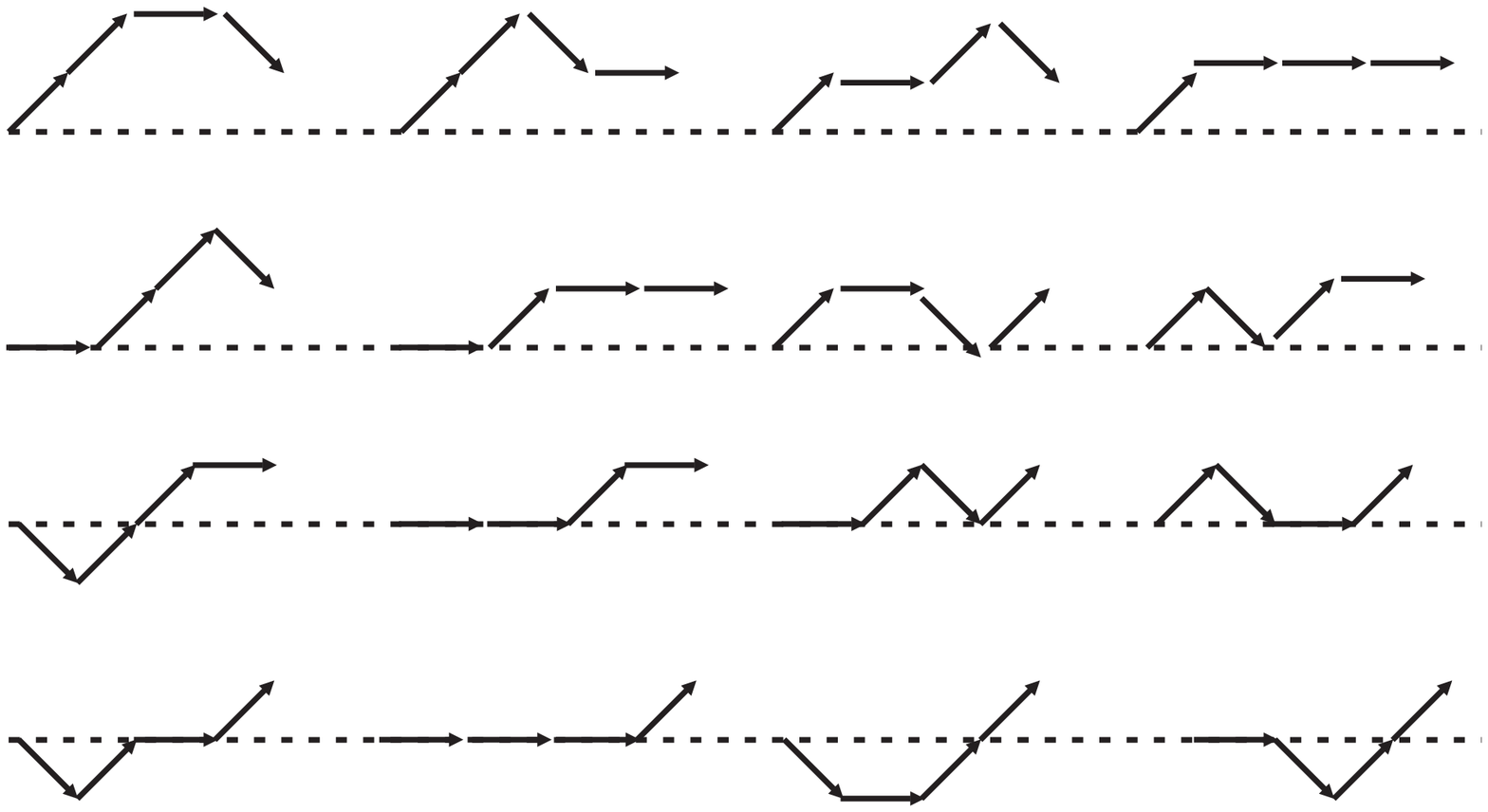}\\
Fig.2. An example of Corollary 3.6, where $n=4$
\end{center}
\begin{cor}\label{coro(5,-1)} Let $\mathcal{S}=\{(1,1),(5,-1),(1,-1)\}$, $w(s)=1$ for any $s\in\mathcal{S}$,
$l((1,1))=1$, $l((5,-1))=2$ and $l((1,-1))=0$. Then the number of
the pointed $(\mathcal{S},1)$-lattice path with length $n+1$ and
non-positive pointed length $m$ is equal to the number of the
$\mathcal{S}$-nonnegative path with length $n$.
\end{cor}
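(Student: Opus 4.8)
The plan is to recognize Corollary 3.7 as a direct specialization of Theorem 3.4 (the Chung-Feller theorem of Dyck type) to a particular instance of Class 1. Let me analyze the given step set $\mathcal{S}=\{(1,1),(5,-1),(1,-1)\}$ with its length assignments $l((1,1))=1$, $l((5,-1))=2$, $l((1,-1))=0$.

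In Class 1, the steps are $\mathcal{S}_A = \{(2i-1,-1) : i \in A\}$ and $\mathcal{S}_B = \{(2i,0) : i \in B\}$ plus $(1,1)$. The down-step $(2i-1,-1)$ has length $i-1$ and weight $a_i$. Here $(1,-1) = (2\cdot 1 - 1, -1)$ corresponds to $i=1$ (length $0$), and $(5,-1) = (2\cdot 3 - 1, -1)$ corresponds to $i=3$ (length $2$). So $A = \{1, 3\}$ with $a_1 = a_3 = 1$. There are no flat steps, so $B = \emptyset$. Thus this is exactly Class 1 with weights all equal to $1$.

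So the plan is short. First, I would verify the translation: the given $\mathcal{S}$ matches $\mathcal{S}_1$ with $A=\{1,3\}$, $a_1=a_3=1$, and $B=\emptyset$, and confirm the length assignments agree with Class 1's $l((2i-1,-1))=i-1$ (giving $l((1,-1))=0$, $l((5,-1))=2$) and $l((1,1))=1$. Since $w(s)=1$ throughout, the weight of a path is $1$ and the sum of weights over a set of paths is just its cardinality.

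Then I would apply Theorem 3.4 directly with $i=1$. The theorem states $\bar{f}_{1;n,m} = f_{1;n}$, meaning the sum of weights of pointed $(\mathcal{S}_1,1)$-lattice paths of length $n+1$ with non-positive pointed length $m$ (whose final step has length at least $1$) equals the sum of weights of $\mathcal{S}_1$-nonnegative paths of length $n$, independent of $m$. Since all weights are $1$, these sums are cardinalities, so the number of pointed $(\mathcal{S},1)$-lattice paths with length $n+1$ and non-positive pointed length $m$ equals the number of $\mathcal{S}$-nonnegative paths of length $n$, which is the desired conclusion. I anticipate no real obstacle here: unlike Corollaries 3.5 and 3.6, there is no further simplification of $F_1$ to a Catalan or Motzkin number required—the corollary only asserts the equality with the count of nonnegative paths. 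The one point needing care is checking that the length function matches exactly (particularly $l((5,-1))=2$ forcing $i=3$ and hence the gap in $A$), after which the result is immediate from Theorem 3.4.
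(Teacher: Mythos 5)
Your proposal is correct and is precisely the argument the paper has in mind: the paper explicitly omits the proof of this corollary because it is an immediate specialization of Theorem 3.4, and your identification of the step set as Class 1 with $A=\{1,3\}$, $a_1=a_3=1$, $B=\emptyset$ (so that $l((1,-1))=0$ and $l((5,-1))=2$ match the Class 1 length function $l((2i-1,-1))=i-1$) is exactly the needed translation. Since all weights equal $1$, the weighted identity $\bar{f}_{1;n,m}=f_{1;n}$ becomes the stated equality of cardinalities, with no further reduction (unlike Corollaries 3.5 and 3.6) required.
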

We omit the proof of Corollary 3.7. In Fig.3, we show an example of
this Corollary with $n=3$, where $*$ denote the marked point.
\begin{center}
\includegraphics[width=12cm]{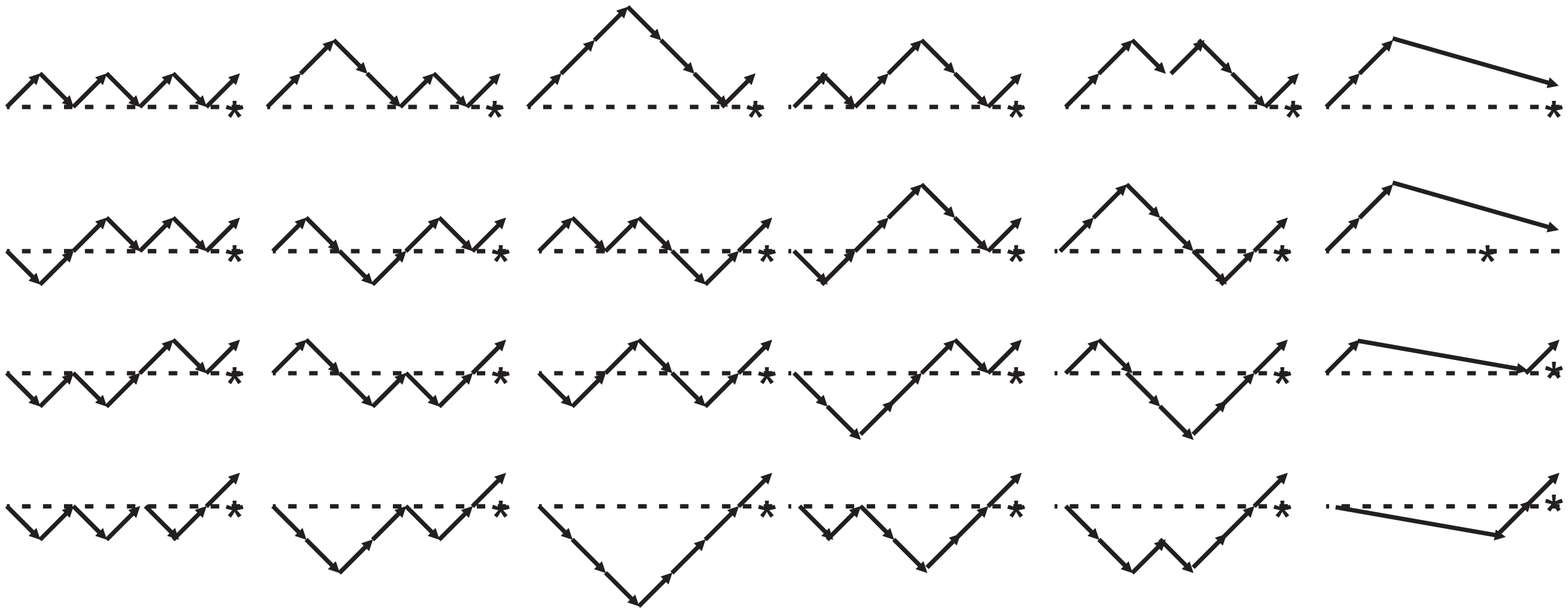}\\
Fig.3. An example of Corollary 3.7, where $n=3$.
\end{center}
\begin{cor}\label{coro(1,-3)} Let $\mathcal{S}=\{(1,1),(2,0),(1,-3)\}$, $w(s)=1$ for any $s\in\mathcal{S}$,
$l((1,1))=1$, $l((2,0))=1$ and $l((1,-3))=0$. Then the number of the
pointed $(\mathcal{S},1)$-lattice path with length $n+1$ and
non-positive length $m$ is equal to the number of the
$\mathcal{S}$-nonnegative path with length $n$.
\end{cor}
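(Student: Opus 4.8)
The plan is to recognize the step set $\mathcal{S}=\{(1,1),(2,0),(1,-3)\}$ of the statement as an instance of the Class 3 step set $\mathcal{S}_3$ and then to read off the conclusion directly from Theorem \ref{dycktypechungfeller}. First I would fix the parameters $A=\{2\}$ and $B=\{1\}$ with weights $a_2=b_1=1$. Then $\mathcal{S}_A=\{(1,-2\cdot2+1)\}=\{(1,-3)\}$ and $\mathcal{S}_B=\{(2\cdot1,0)\}=\{(2,0)\}$, so $\mathcal{S}_3=\mathcal{S}_A\cup\mathcal{S}_B\cup\{(1,1)\}=\mathcal{S}$. The Class 3 length and weight rules then give $l((2,0))=1$, $l((1,-3))=0$, $l((1,1))=1$ and $w\equiv1$, which match the functions in the statement exactly; hence the present setting is a genuine specialization of Class 3.

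Next I would reconcile the quantity called \emph{non-positive length} in the statement with the \emph{non-positive pointed length} $\overline{lp}(\dot{L})=\bar{l}(L)+p(\dot{L})$ of Theorem \ref{dycktypechungfeller}. The key observation is that a path $\dot{L}\in\mathscr{M}_3^1$ has a final step of length at least $1$, while here the only steps of positive length are $(1,1)$ and $(2,0)$, both of length exactly $1$. By the definition of the pointed path the mark then sits at $(x_m-2j,0)$ with $0\le j\le l(s_m)-1=0$, forcing $j=0$ and so $p(\dot{L})=0$. Consequently $\overline{lp}(\dot{L})=\bar{l}(L)$, i.e. in this example the non-positive pointed length is literally the non-positive length, so the two countings named in the statement refer to the same set of paths.

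With these identifications the result is immediate: setting $i=3$ in Theorem \ref{dycktypechungfeller} yields $\bar{f}_{3;n,m}=f_{3;n}$ for all $0\le m\le n$, which says that the weighted number of pointed $(\mathcal{S},1)$-lattice paths of length $n+1$ with non-positive pointed length $m$ equals the weighted number $f_{3;n}$ of $\mathcal{S}_3$-nonnegative paths of length $n$. Since all weights equal $1$, these weighted sums are ordinary cardinalities, which is exactly the assertion of the corollary. For completeness I would also record, by specializing Lemma \ref{Dycktypegenerating}(3) to $A=\{2\}$ and $B=\{1\}$, that the common value is governed by $F_3(z)=1+zF_3(z)+z^2[F_3(z)]^3$.

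I do not anticipate a real obstacle, since the statement is nothing more than Theorem \ref{dycktypechungfeller} read off for Class 3; the only step that needs care is the bookkeeping of the second paragraph, namely checking that $p(\dot{L})\equiv0$ so that the words \emph{non-positive length} and \emph{non-positive pointed length} coincide here. This is the same collapse that makes the Catalan case go through, and it is why the argument may be omitted in the same spirit as the proof of Corollary \ref{coro(5,-1)}.
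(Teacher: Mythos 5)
Your proposal is correct and follows exactly the route the paper intends (and omits as routine): it is the specialization of Theorem \ref{dycktypechungfeller} to Class 3 with $A=\{2\}$, $B=\{1\}$, $a_2=b_1=1$, together with the observation that every positive-length step here has length exactly $1$, so $p(\dot{L})=0$ and the non-positive pointed length collapses to the non-positive length --- the same collapse used in the paper's proofs of Corollaries 3.5 and 3.6. Your additional remark that Lemma \ref{Dycktypegenerating}(3) gives $F_3(z)=1+zF_3(z)+z^2[F_3(z)]^3$ for the common value is a correct and natural supplement.
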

We omit the proof of Corollary 3.8. In Fig.4, we show an example of
this Corollary with $n=3$.
\begin{center}
\includegraphics[width=5cm]{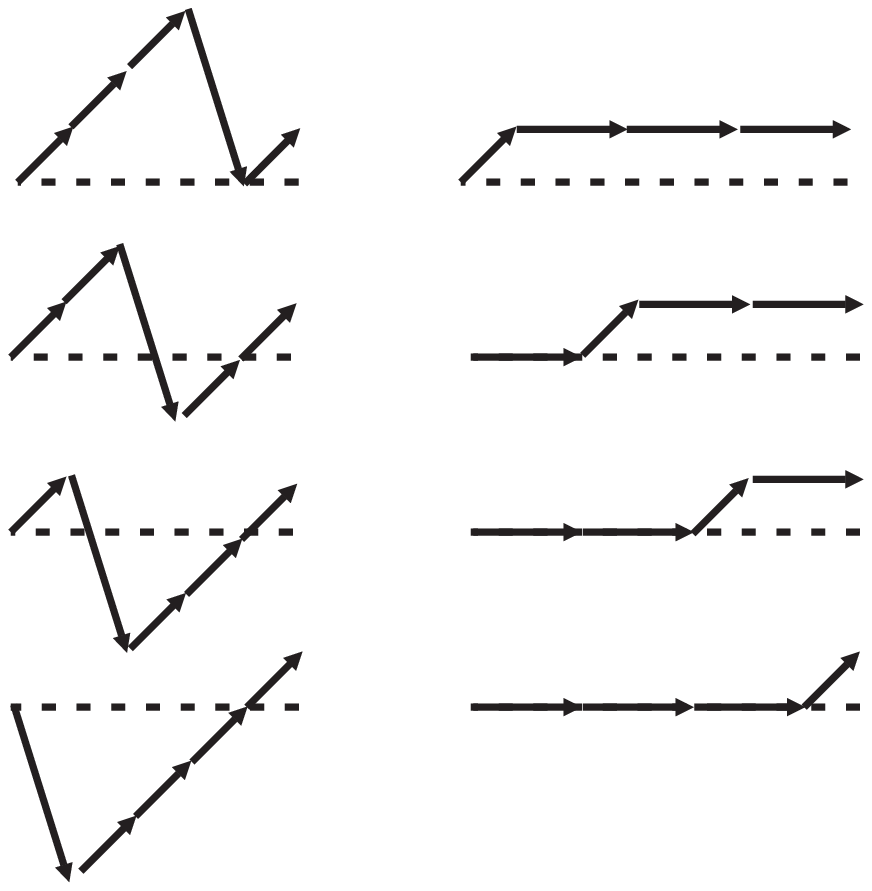}\\
Fig.4. An example of Corollary 3.8, where $n=3$.
\end{center}
\section{The Chung-Feller property of Motzkin
type} In this section, we will prove the Chung-Feller Theorem of
Motzkin type for Classes 1,2. For $i=1,2,$ recall that an
$(\mathcal{S}_i,-k)$-nonnegative path is an
$(\mathcal{S}_i,-k)$-lattice path which never goes below the line
$y=-k$, where $k\geq 0$. $\mathscr{N}_i^{-k}$ is the set of all the
$(\mathcal{S}_i,-k)$-nonnegative path. Define the generating
functions
$$H_{i}^k(z)=\sum\limits_{L\in \mathscr{N}_i^{-k}}w(L)z^{l(L)}.$$

\begin{lem}\label{motzkintypegeneratingminimum} For Classes 1,2, we
have
\begin{eqnarray*}H_1^k(z)=[F_1(z)]^{k+1}\left[\sum\limits_{i\in
A}a_iz^{i-1}\right]^k\text{ and
}H_2^k(z)=[F_2(z)]^{k+1}\left[\sum\limits_{i\in A}a_iz^{i}\right]^k.
\end{eqnarray*}
\end{lem}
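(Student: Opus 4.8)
The plan is to prove the formula for $H_i^k(z)$ by induction on $k$, using a first-return-type decomposition of an $(\mathcal{S}_i,-k)$-nonnegative path according to when it first touches the lowest permitted line $y=-k$. The base case $k=0$ is immediate, since $H_i^0(z)=F_i(z)$ by definition of $F_i$, and the claimed formula specializes to $[F_i(z)]^1$ times an empty product, i.e. $F_i(z)$. So the real content is the inductive step, which amounts to expressing $H_i^k(z)$ in terms of $H_i^{k-1}(z)$.

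\textbf{The recursion.}
For the inductive step I would argue as follows. Let $L$ be an $(\mathcal{S}_i,-k)$-nonnegative path, and consider the first step that crosses from level $\geq -k+1$ down to level $-k$; since the only down-step in each step set has vertical drop $1$, such a step is a down-step $t$ ending exactly on the line $y=-k$. For Class $1$ this is $t=(2i-1,-1)$ with $w(t)=a_i$ and $l(t)=i-1$, contributing $\sum_{i\in A}a_i z^{i-1}$; for Class $2$ it is $t=(i,-1)$ with $w(t)=a_i$ and $l(t)=i$, contributing $\sum_{i\in A}a_i z^{i}$. The portion of $L$ before $t$ never goes below $y=-k+1$ and ends on that line, so it is enumerated by $H_i^{k-1}(z)$; the portion after $t$ is an $(\mathcal{S}_i,-k)$-nonnegative path, hence enumerated by $H_i^k(z)$. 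Splitting off the possibility that $L$ never reaches $y=-k$ at all (giving the term $F_i(z)$, paths staying weakly above $y=-k+1$ shifted down — or more cleanly, a path that stays at or above $-k$ but is analyzed by first return to $-k$), one obtains a linear relation of the shape
\begin{equation*}
H_i^k(z)=F_i(z)+\Bigl(\sum_{i\in A}a_i z^{e_i}\Bigr)\,H_i^{k-1}(z)\,H_i^k(z),
\end{equation*}
where $e_i=i-1$ for Class $1$ and $e_i=i$ for Class $2$. Solving for $H_i^k(z)$ and substituting the inductive hypothesis for $H_i^{k-1}(z)$ should collapse to the stated closed form.

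\textbf{The main obstacle.}
I expect the delicate point to be setting up the decomposition cleanly so that the factor $F_i(z)$ and the recursive factor $H_i^{k-1}(z)\,H_i^k(z)$ do not double-count, and in particular identifying exactly which excursion is counted by $F_i(z)$ versus $H_i^{k-1}(z)$. The geometric content is that a path confined above $y=-k$ can be cut at the first visit to $y=-k$: the prefix lives in the strip above $y=-k+1$ and the suffix is again confined above $y=-k$, but one must verify that the prefix is genuinely an $\mathcal{S}_i$-nonnegative path relative to its own starting line (so that $F_i$ or $H_i^{k-1}$ applies) and that no path is split at the wrong return. Once the recursion is pinned down the algebra is routine: one checks $[F_i]^{k+1}(\sum a_i z^{e_i})^{k}$ satisfies the same recursion as $H_i^k$ with the same initial value, which proves the formula for both classes simultaneously. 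Note this argument uses crucially that every down-step has vertical displacement exactly $-1$, which holds for Classes $1$ and $2$ but fails for Class $3$ — consistent with the fact that the Motzkin-type results are stated only for Classes $1$ and $2$.
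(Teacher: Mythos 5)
Your overall strategy (induction on $k$ via a first-passage decomposition at the lowest line) is workable, but the recursion you actually wrote down is wrong, and the error is not cosmetic. Two things fail. First, the additive term $F_i(z)$ is meant to account for paths that never reach $y=-k$; but by definition every path in $\mathscr{N}_i^{-k}$ \emph{ends} on the line $y=-k$, so for $k\geq 1$ there are no such paths and this term should not appear. Second, and more seriously, you misidentify the suffix: the portion of $L$ after the first step $t$ landing on $y=-k$ starts on $y=-k$, ends on $y=-k$, and stays weakly above it, so after translation it is an ordinary $\mathcal{S}_i$-nonnegative path counted by $F_i(z)$ --- not an object counted by $H_i^k(z)$, which counts paths that start at $(0,0)$ and end $k$ levels lower. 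As a result your recursion
\[
H_i^k(z)=F_i(z)+\Bigl(\sum_{i\in A}a_iz^{e_i}\Bigr)H_i^{k-1}(z)H_i^k(z)
\]
(with $e_i=i-1$ for Class 1 and $e_i=i$ for Class 2, in your notation) solves to $H_i^k(z)=F_i(z)\Bigl(1-\bigl(\sum_{i\in A}a_iz^{e_i}\bigr)^k[F_i(z)]^k\Bigr)^{-1}$ once the inductive hypothesis is substituted, and this is \emph{not} the claimed closed form $[F_i(z)]^{k+1}\bigl(\sum_{i\in A}a_iz^{e_i}\bigr)^k$ (already for $k=1$ in the Dyck case $A=\{1\}$, $B=\emptyset$ the two series differ). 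So the algebra does not ``collapse'' as you assert; the proof as written does not go through.

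The repair is immediate, and it brings you back to the paper's argument. The first-passage decomposition, correctly read, gives the multiplicative recursion
\[
H_i^k(z)=H_i^{k-1}(z)\cdot\Bigl(\sum_{i\in A}a_iz^{e_i}\Bigr)\cdot F_i(z),\qquad H_i^0(z)=F_i(z),
\]
since the prefix before $t$ stays weakly above $y=-k+1$ and ends on it (hence is counted by $H_i^{k-1}$), $t$ contributes $\sum_{i\in A}a_iz^{e_i}$, and the suffix contributes $F_i(z)$; induction then yields the lemma at once. This is just a sequential version of what the paper does in one shot: mark, for each $1\leq m\leq k$, the first step whose endpoint has height $-m$ (all of these lie in $\mathcal{S}_A$, using exactly the unit-drop property you correctly single out as the reason Class 3 is excluded); these $k$ steps cut $L$ into $k+1$ translated $\mathcal{S}_i$-nonnegative pieces, giving $[F_i(z)]^{k+1}\bigl(\sum_{i\in A}a_iz^{e_i}\bigr)^k$ directly.
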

\begin{proof} For any a path $L\in \mathscr{N}_i^{-k}$ and $L\neq\emptyset$, we
consider the first step $s_m$ with height $-m$, where $1\leq m\leq
k$. Thus we can decompose the path $L$ into
$L_0s_{1}{L}_1s_{2}\ldots {L}_{k-1}s_{k}{L}_{k}$, where
$L_r\in\mathscr{N}_i$ for all $0\leq r\leq k$ and
$s_{j}\in\mathcal{S}_{A}$ for all $j$. Thus,
\begin{eqnarray*}H_1^k(z)=[F_1(z)]^{k+1}\left[\sum\limits_{i\in
A}a_iz^{i-1}\right]^k\text{ and
}H_2^k(z)=[F_2(z)]^{k+1}\left[\sum\limits_{i\in A}a_iz^{i}\right]^k.
\end{eqnarray*}
\end{proof}

Now we focus on  the generating functions
$M_i(y,z)=\sum\limits_{\dot{L}\in{\mathscr{M}_i^1}}w(L)y^{mlp(\dot{L})}z^{l(L)-1}$
for $i=1,2$.

\begin{lem}\label{motzkintypegeneratingtheorem} For Classes 1,2, we
have
\begin{eqnarray*}
M_1(y,z)&=&\frac{P_1(y,z)F_1(yz)}{1-\left[\sum\limits_{i\in
A}a_iy^{i-1}z^{i}\right][F_1(yz)F_1(z)]},
\end{eqnarray*}and
\begin{eqnarray*}
M_2(y,z)&=&\frac{P_2(y,z)F_2(yz)}{1-\left[\sum\limits_{i\in
A}a_iy^{i}z^{i+1}\right][F_2(yz)F_2(z)]}.
\end{eqnarray*}
\end{lem}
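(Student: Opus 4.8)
The target is a formula for $M_i(y,z)=\sum_{\dot L\in\mathscr M_i^1}w(L)\,y^{mlp(\dot L)}z^{l(L)-1}$, and the plan is to mirror the proof of Lemma~\ref{dycktypegeneratingtheorem} but now tracking the \emph{absolute minimum pointed length} $mlp(\dot L)=ml(L)+p(\dot L)$ rather than the non-positive pointed length. The key is to decompose a path $\dot L\in\mathscr M_i^1$ at its absolute minimum structure. I would write $\dot L$ by locating the sequence of steps in $\mathcal S_A$ that descend from the $x$-axis down to the absolute minimum level and then the single $(1,1)$-step that first leaves the $x$-axis going upward toward the line $y=1$ at the end. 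Concretely, if the path reaches absolute minimum value $-k$, I would split it as an $(\mathcal S_i,-k)$-nonnegative piece sitting at the bottom, linked by $k$ downward $\mathcal S_A$-steps to the portion above the $x$-axis, and finally a pointed $\mathcal S_i$-nonnegative tail in $\mathscr P_i$ carrying the marked point.

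\medskip
\noindent\textbf{Assembling the generating function.} Each structural block contributes a known generating function. The pointed tail contributes $P_i(y,z)$ (Lemma~\ref{dycktypegeneragtingpoint}), since the marked point and the final step of length $\ge 1$ live there. The portion hanging at and below the minimum, built from $(\mathcal S_i,-k)$-nonnegative paths glued by descending $\mathcal S_A$-steps, is governed by $H_i^k(z)$ from Lemma~\ref{motzkintypegeneratingminimum}, namely $H_1^k(z)=[F_1(z)]^{k+1}\big[\sum_{i\in A}a_iz^{i-1}\big]^k$ and similarly for $i=2$. The subtle point is the weighting: steps that lie at or below the $x$-axis contribute to $ml(L)$, so their lengths must be marked by $y$, while the single ascending $(1,1)$-step that the minimum-point decomposition uses to climb back must be treated consistently with the convention in $P_i$. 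I would therefore replace $F_i(z)$ by $F_i(yz)$ in exactly those factors corresponding to material counted by $ml(L)$, producing the $F_i(yz)$ factor in the numerator and the $y$-shifted descending-step weight $\sum_{i\in A}a_iy^{i-1}z^{i}$ (resp.\ $\sum_{i\in A}a_iy^{i}z^{i+1}$) inside the geometric sum over $k$.

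\medskip
\noindent\textbf{Summing over the minimum depth.} Summing the contributions over all possible absolute minimum depths $k\ge 0$ yields a geometric series in the ratio $\big[\sum_{i\in A}a_iy^{i-1}z^{i}\big]F_1(yz)F_1(z)$ for Class~1 (and the analogous expression for Class~2), which sums to $\dfrac{1}{1-\big[\sum_{i\in A}a_iy^{i-1}z^{i}\big]F_1(yz)F_1(z)}$. Multiplying by the numerator factors $P_i(y,z)F_i(yz)$ gives the stated formula. I expect the main obstacle to be bookkeeping the $y$-exponents correctly: one must verify that every step contributing to $ml(L)$ is marked by $y$ exactly once and that the pointed length $p(\dot L)$ inside $P_i(y,z)$ is not double-counted against the $ml$-contribution, since the absolute minimum position $mp(L)$ and the marked point interact through the definition $mlp(\dot L)=ml(L)+p(\dot L)$. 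Getting the boundary case (the empty path, $k=0$) and the rightmost-minimum convention consistent with the earlier lemmas is where the care is needed; once the exponent accounting is pinned down, the algebra is the same geometric-series collapse used throughout Section~3.
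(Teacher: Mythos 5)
Your skeleton matches the paper's --- stratify $\mathscr{M}_i^1$ by the minimum value $-k$, weight the marked part via $H_i^k(yz)$ from Lemma~\ref{motzkintypegeneratingminimum}, attach the pointed tail through $P_i(y,z)$, and collapse the sum over $k$ as a geometric series --- and the final ratio you write is correct. But two steps in the middle genuinely fail. The first is your marking rule: you say ``steps that lie at or below the $x$-axis contribute to $ml(L)$, so their lengths must be marked by $y$.'' That is the rule for the non-positive length $\bar{l}(L)$, i.e.\ for the Dyck-type generating function $D_i(y,z)$, not for $ml(L)$. By definition $ml(L)=\sum_{1\le j\le mp(L)}l(s_j)$: what gets marked is every step weakly before the absolute minimum position $mp(L)$, including steps lying far above the $x$-axis, and no step after $mp(L)$ is marked even if it sits at height $\le 0$. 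This is exactly why the prefix $R$ from the origin to the rightmost minimum point, which is an $(\mathcal{S}_i,-k)$-nonnegative path, is counted by $H_i^k(yz)$ (every one of its steps is marked), while everything to the right of the minimum carries no $y$; under your rule the prefix would not be $H_i^k(yz)$, and you would be computing $D_i(y,z)$ rather than $M_i(y,z)$.

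The second failure is structural: your decomposition has no block between the absolute minimum and the pointed tail, yet that block is where the unmarked factors come from. After the rightmost minimum at level $-k$ the path must climb back to the line $y=1$; the paper decomposes this climb, using for each level the rightmost $(1,1)$-step ending at that level, into $k+1$ up-steps interspersed with $k$ unmarked $\mathcal{S}_i$-nonnegative paths, followed by the pointed tail, so that a fixed depth $k$ contributes $H_i^k(yz)\,[F_i(z)]^k z^k\,P_i(y,z)$ (one of the $k+1$ factors of $z$ being absorbed by the normalization $z^{l(L)-1}$). The unmarked $F_1(z)$ in your ratio $\bigl[\sum_{i\in A}a_iy^{i-1}z^{i}\bigr]F_1(yz)F_1(z)$, and the mismatched exponents $y^{i-1}z^{i}$ (a bare substitution $z\mapsto yz$ into the descending-step weight gives only $a_iy^{i-1}z^{i-1}$), can come only from these intermediate nonnegative paths and up-steps. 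Since your described decomposition --- bottom piece glued by descending $\mathcal{S}_A$-steps, ``the single $(1,1)$-step that first leaves the $x$-axis,'' and the pointed tail --- contains no such block, it cannot yield the ratio you wrote: the geometric series is asserted, not derived, and the ``$y$-exponent bookkeeping'' you defer to the end is precisely the content of the lemma.
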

\begin{proof} For any $k\geq 0$, let ${\mathscr{M}_i^1}(k)$ be the set of all the
paths $L$ in the set ${\mathscr{M}_i^1}$ such that $m(L)=-k$, where
$m(L)$ is the minimum value of $L$. Clearly,
${\mathscr{M}_i^1}=\bigcup\limits_{k\geq 0}{\mathscr{M}_i^1}(k)$.
Given a path $\dot{L}\in{\mathscr{M}_i^1}(k)$ and $L\neq \emptyset$,
using the absolute minimum position of $L$, we can decompose $L$
into $R(1,1)\dot{T}$, where $R\in\mathscr{N}_i^{-k}$. For the path
$\dot{T}$, we consider the rightmost step $(1,1)$ with height $-m$ ,
where $-1\leq m\leq k-1$. Thus we can decompose the path $T$ into
$L_{k-1}(1,1)L_{k-2}(1,1)\ldots L_{0}(1,1)\dot{Q}$, where $L_{j}$ is
$\mathcal{S}_i$-nonnegative path for all $0\leq j\leq k-1$ and
$\dot{Q}\in \mathscr{M}_i$, where $\mathscr{M}_i$ is the set of all
the pointed $\mathcal{S}_i$-nonnegative path. Hence, by Lemmas 3.1
and 4.1, we get \begin{eqnarray*} M_i(y,z)&=&\sum\limits_{k\geq
0}H_i^k(yz)[F_i(z)]^kz^kP_i(y,z).
\end{eqnarray*}Hence,
\begin{eqnarray*}
M_1(y,z)&=&P_1(y,z)F_1(yz)\sum\limits_{k\geq
0}[F_1(yz)]^{k}\left[\sum\limits_{i\in
A}a_iy^{i-1}z^{i-1}\right]^k[F_1(z)]^kz^k\\
&=&\frac{P_1(y,z)F_1(yz)}{1-\left[\sum\limits_{i\in
A}a_iy^{i-1}z^{i}\right][F_1(yz)F_1(z)]},
\end{eqnarray*}and
\begin{eqnarray*}
M_2(y,z)&=&P_2(y,z)F_2(yz)\sum\limits_{k\geq
0}[F_2(yz)]^{k}\left[\sum\limits_{i\in
A}a_iy^{i}z^{i}\right]^k[F_2(z)]^kz^k\\
&=&\frac{P_2(y,z)F_2(yz)}{1-\left[\sum\limits_{i\in
A}a_iy^{i}z^{i+1}\right][F_2(yz)F_2(z)]}.
\end{eqnarray*}
\end{proof}

Now, we can prove the following Chung-Feller theorem of Motzkin type
for Classes 1,2.

\begin{thm}\label{motzkintypechungfeller}
For Classes $i=1,2,$ let $\bar{g}_{i;n,m}$ be the sum of the weights
of the pointed
$(\mathcal{S}_i,1)$-lattice paths which \\
(a.) have length $n+1$,\\
(b.) have absolute minimum  pointed length $m$,\\
(c.) have the length of the final step no less than $1$.\\
Let ${f}_{i;n}$ be the sum of the weights of the
$\mathcal{S}_i$-nonnegative paths with length $n$. Then
$\bar{g}_{i;n,m}$ has the Chung-Feller property of Motzkin type,
i.e., $\bar{g}_{i;n,m}={f}_{i;n}$.
\end{thm}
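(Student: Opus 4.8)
**The plan is to mirror the proof of Theorem~\ref{dycktypechungfeller} exactly, but starting from the Motzkin-type generating function $M_i(y,z)$ produced by Lemma~\ref{motzkintypegeneratingtheorem} in place of $D_i(y,z)=G_i(y,z)P_i(y,z)$.** Recall that equation~(2) in Section~2 tells us
\[
M_i(y,z)=\sum_{n\geq 0}\sum_{m=0}^n\bar{g}_{i;n,m}\,y^mz^n,
\]
so the statement $\bar{g}_{i;n,m}=f_{i;n}$ for all $0\leq m\leq n$ is equivalent to the single closed-form identity
\[
M_i(y,z)=\sum_{n\geq 0}f_{i;n}z^n\sum_{m=0}^n y^m
=\frac{yF_i(yz)-F_i(z)}{y-1}.
\]
This is precisely the same target expression reached at the end of the proof of Theorem~\ref{dycktypechungfeller}, so once I establish this identity the conclusion follows by reading off coefficients, using $F_i(z)=\sum_{n\geq 0}f_{i;n}z^n$ from equation~(3). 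Thus the whole theorem reduces to an algebraic simplification of the formula for $M_i(y,z)$ given in Lemma~\ref{motzkintypegeneratingtheorem}.

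First I would treat Class~1. I substitute the explicit form of $P_1(y,z)$ from Lemma~\ref{dycktypegeneragtingpoint}(1) into
\[
M_1(y,z)=\frac{P_1(y,z)F_1(yz)}{1-\left[\sum_{i\in A}a_iy^{i-1}z^{i}\right]F_1(yz)F_1(z)}.
\]
The numerator $P_1(y,z)F_1(yz)$ is literally the same numerator that appears in the Dyck-type computation, and I would simplify it by the identical two-step manipulation used there: first rewrite each finite geometric sum $\sum_{j=0}^{i-1}y^j=\frac{y^i-1}{y-1}$ (and similarly the $\sum_{j=0}^{i-2}$ sum), then invoke Lemma~\ref{Dycktypegenerating}(1) twice---once in the form $F_1(yz)-\big(\sum_{i\in B}b_iy^iz^i\big)F_1(yz)-\big(\sum_{i\in A}a_iy^iz^i\big)[F_1(yz)]^2=1$ and once through the relation $\big(\sum_{i\in B}b_iz^iy^i\big)F_1(yz)=F_1(yz)-\big(\sum_{i\in A}a_iy^iz^i\big)[F_1(yz)]^2-1$. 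The Dyck-type proof shows that this collapses the numerator to
\[
\frac{\bigl[yF_1(yz)-F_1(z)\bigr]\bigl[1-\big(\sum_{i\in A}a_iy^{i-1}z^i\big)F_1(z)F_1(yz)\bigr]}{y-1}.
\]
The key observation that makes the Motzkin-type case work is that the second bracketed factor is \emph{exactly} the denominator of $M_1(y,z)$. Hence it cancels, leaving $M_1(y,z)=\dfrac{yF_1(yz)-F_1(z)}{y-1}$, which is the desired target. The argument for Class~2 is the same after the routine $y^i\mapsto y^{i+1}$, $z^i\mapsto z^{i+1}$ shifts inherited from the definitions in Class~2.

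The point I would flag as the real content---rather than the bookkeeping---is the cancellation of the denominator. In the Dyck-type proof the factor $1-\big(\sum_{i\in A}a_iy^{i-1}z^i\big)F_1(z)F_1(yz)$ arose only because $G_1(y,z)$ had that quantity in its denominator; here it must instead arise from the geometric-series resummation over the minimum depth $k$ that produced the denominator of $M_1(y,z)$ in Lemma~\ref{motzkintypegeneratingtheorem}. So the heart of the proof is the recognition that the decomposition underlying $M_i(y,z)$ (splitting at the absolute minimum position, iterating across the $k$ descents through $\mathscr{N}_i^{-k}$) yields precisely the same rational-function denominator as the decomposition underlying $D_i(y,z)=G_i(y,z)P_i(y,z)$. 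Once that structural coincidence is in hand, the remaining work is the purely formal simplification above, and I would simply write ``by the same computation as in the proof of Theorem~\ref{dycktypechungfeller}'' for the common algebra and present only the denominator-cancellation step in detail, then conclude $\bar{g}_{i;n,m}=f_{i;n}$ for Classes $i=1,2$.
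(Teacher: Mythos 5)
Your proof is correct and is essentially the paper's own argument: the paper likewise observes that the expression for $M_i(y,z)$ in Lemma 4.2 coincides exactly with the intermediate rational form of $D_i(y,z)$ reached in the proof of Theorem 3.4, so that computation already gives $M_i(y,z)=\frac{yF_i(yz)-F_i(z)}{y-1}$, and coefficient extraction yields $\bar{g}_{i;n,m}=f_{i;n}$. The only difference is presentational: the paper compresses the numerator simplification and denominator cancellation you spell out into the one-line remark that this identity was already derived in the proof of Theorem 3.4.
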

\begin{proof} In fact, we derived
\begin{eqnarray*}M_i(y,z)&=&\frac{yF_i(yz)-F_i(z)}{y-1}.
\end{eqnarray*}in the
proof of Theorem 3.4 for $i=1,2$. Hence, the theorems
hold.\end{proof}
\begin{cor}(L. Shapiro) Let
$\mathcal{S}=\{(1,1),(1,-1),(1,0)\}$, $w(s)=1$ and $l(s)=1$ for any
$s\in\mathcal{S}$. Then the number of the $(\mathcal{S},1)$-lattice
path with length $n+1$ and absolute minimum  pointed length $m$  is
the $n$-th Motzkin number.
\end{cor}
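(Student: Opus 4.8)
The plan is to exhibit the given $\mathcal{S}$ as the special case of Class 2 in which $A=B=\{1\}$ and $a_1=b_1=1$, and then to read off the result from Theorem 4.3 together with the Motzkin functional equation supplied by Lemma 2.3. With these parameters $\mathcal{S}_A=\{(1,-1)\}$ and $\mathcal{S}_B=\{(1,0)\}$, so $\mathcal{S}_2=\{(1,1),(1,-1),(1,0)\}=\mathcal{S}$; moreover each step then carries length $l(s)=1$ and weight $w(s)=1$, matching the hypotheses exactly. Hence everything proved for Class 2 applies verbatim to this $\mathcal{S}$.

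First I would compute $F_2$. By Lemma 2.3(2) we have $F_2(z)=1+\big(\sum_{i\in B}b_iz^i\big)F_2(z)+\big(\sum_{i\in A}a_iz^{i+1}\big)[F_2(z)]^2$; substituting $A=B=\{1\}$ and $a_1=b_1=1$ collapses this to $F_2(z)=1+zF_2(z)+z^2[F_2(z)]^2$, which is precisely the defining equation of the Motzkin generating function $M(z)$ recorded in the introduction. Therefore $f_{2;n}=m_n$, the $n$-th Motzkin number.

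The one conceptual point—and the only place requiring any care—is that the pointing machinery degenerates here, which is what lets the statement be phrased in terms of ordinary $(\mathcal{S},1)$-lattice paths. For a pointed path $\dot{L}=[L,(x_m-j,0)]\in\mathscr{M}_2^1$ one requires $0\le j\le l(s_m)-1$; but $l(s_m)=1$ forces $j=0$, so $p(\dot{L})=0$ and the marked point is uniquely $(x_m,0)$. Consequently erasing the marked point is a weight-preserving bijection between $\mathscr{M}_2^1$ and the set of ordinary $(\mathcal{S},1)$-lattice paths (condition (c) of Theorem 4.3 is automatic, since every step has length $1$), and under this bijection $mlp(\dot{L})=ml(L)+0=ml(L)$. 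Thus the absolute minimum pointed length of $\dot{L}$ is simply the absolute minimum length of the underlying path.

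Finally I would apply Theorem 4.3 with $i=2$, which yields $\bar{g}_{2;n,m}=f_{2;n}=m_n$ for all $0\le m\le n$. Translating through the trivial pointing of the previous paragraph, the sum of weights—here just the number, since $w\equiv 1$—of $(\mathcal{S},1)$-lattice paths of length $n+1$ with absolute minimum (pointed) length $m$ equals $m_n$, independent of $m$, which is the claim. I expect no genuine obstacle: the substantive content has already been absorbed into Theorem 4.3 and Lemma 2.3, and the reduction to ordinary paths is exactly parallel to the one carried out in the proof of Corollary 3.6.
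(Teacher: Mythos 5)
Your proposal is correct and follows essentially the same route as the paper's own proof: identify $\mathcal{S}$ with Class 2 (taking $A=B=\{1\}$, $a_1=b_1=1$), invoke Theorem 4.3 to get $\bar{g}_{2;n,m}=f_{2;n}$, use Lemma 2.3 to see $F_2(z)=1+zF_2(z)+z^2[F_2(z)]^2$ is the Motzkin generating function, and observe that since every step has length $1$ the marked point is forced, so pointed paths are in weight-preserving bijection with ordinary $(\mathcal{S},1)$-lattice paths. Your treatment of this last degeneration step is in fact slightly more explicit than the paper's, but it is the same argument.
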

\begin{proof} For any a pointed
$(\mathcal{S},1)$-lattice path, we suppose the final point in this
path is $(x,1)$. Then the marked point must be $(x,0)$ since
$l(s)=1$ for any $s\in\mathcal{S}$. So, we can erase the marked
point. By Theorem 4.3,  the number of the pointed
$(\mathcal{S},1)$-lattice path with length $n+1$ and absolute
minimum  pointed length $m$ is equal to  the number of the
$\mathcal{S}$-nonnegative paths with length $n$. By Lemma 2.3, we
have $F_2(z)=1+zF_2(z)+z^2[F_2(z)]^2$ since
$\mathcal{S}=\{(1,1),(1,-1),(1,0)\}$, $w(s)=1$ and $l(s)=1$ for any
$s\in\mathcal{S}$. Hence, the number of the
$\mathcal{S}$-nonnegative paths with length $n$ is the $n$-th
Motzkin number. This complete the proof.
\end{proof}

\begin{cor}\label{dyckexample} Let $\mathcal{S}=\{(1,1),(1,-1)\}$, $w(s)=1$ for any
$s\in\mathcal{S}$, $l((1,1))=1$ and $l((1,-1))=0$. Then the number
of the $\mathcal{S}$-path with length $n$ and absolute minimum
pointed length $m$ in which the length of the final step is no less
than $1$  is the $n$-th Catalan number.
\end{cor}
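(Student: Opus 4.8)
The corollary claims that for the simplest Dyck setting $\mathcal{S}=\{(1,1),(1,-1)\}$ with $l((1,1))=1$, $l((1,-1))=0$, the number of $\mathcal{S}$-paths of length $n$ whose absolute minimum pointed length equals $m$ (among pointed paths whose final step has length at least $1$) is the $n$-th Catalan number, independent of $m$. The natural plan is to realize this as the specialization of the Chung-Feller theorem of Motzkin type (Theorem 4.3) to Class $1$, exactly paralleling how Corollary 4.4 specializes Theorem 4.3 to the Motzkin case and how Corollary 3.5 specializes Theorem 3.4 to the Dyck case. So first I would identify the data: this is Class $1$ with $A=\{1\}$, $B=\emptyset$, $a_1=1$, giving the single down-step $(2\cdot 1-1,-1)=(1,-1)$ of length $i-1=0$ and the up-step $(1,1)$ of length $1$. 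There are no horizontal steps.

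The key structural step is the pointing reduction. In a pointed $(\mathcal{S},1)$-lattice path the final step must be $(1,1)$, since the only other step $(1,-1)$ has length $0<1$ and Theorem 4.3 restricts to paths whose final step has length at least $1$. Because that final up-step has length exactly $1$, the pointed-length parameter $p(\dot L)$ is forced to be $0$, so the marked point coincides with the endpoint's projection and carries no extra information. Thus deleting the final step and erasing the mark gives a bijection from pointed $(\mathcal{S},1)$-lattice paths of length $n+1$ (final step length $\ge 1$) with $mlp(\dot L)=m$ onto $\mathcal{S}$-paths of length $n$ with absolute minimum pointed length $m$, preserving weights (all weights are $1$). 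This is the same erasure argument used in Corollary 3.5.

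With the bijection in hand, Theorem 4.3 applied to Class $1$ gives that the number of such pointed paths equals $f_{1;n}$, the number of $\mathcal{S}_1$-nonnegative paths of length $n$. It remains to identify $f_{1;n}$ as the Catalan number: by Lemma \ref{Dycktypegenerating}(1) with $B=\emptyset$, $A=\{1\}$, $a_1=1$, the generating function satisfies $F_1(z)=1+z[F_1(z)]^2$, which is precisely the Catalan functional equation $C(z)=1+zC(z)^2$ recalled in the introduction. Hence $f_{1;n}=c_n$, and composing this with the bijection yields the claim.

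I expect no serious obstacle here, as the result is a direct specialization; the only point requiring care is verifying that the pointing truly contributes nothing, i.e.\ that $p(\dot L)=0$ is forced so that $mlp(\dot L)=ml(L)$ reduces to the absolute minimum length of the underlying $\mathcal{S}$-path after deletion. This is immediate because the final $(1,1)$ step has length $1$, constraining $j$ in the definition $\dot L=[L,(x_m-2j,0)]$ to the single value $j=0$. Given this, the statement follows from Theorem \ref{motzkintypechungfeller}, Lemma \ref{Dycktypegenerating}, and the erasure bijection, exactly in parallel with Corollaries 3.5 and 4.4.
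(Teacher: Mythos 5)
Your proposal is correct and follows essentially the same route as the paper's own proof: identify the setting as Class 1 with $A=\{1\}$, $B=\emptyset$, note that the final step must be $(1,1)$ so the mark is forced ($p(\dot{L})=0$) and can be erased together with that step, then invoke Theorem 4.3 and the functional equation $F_1(z)=1+z[F_1(z)]^2$ from Lemma 2.3 to identify the count as the Catalan number. Your explicit verification that $p(\dot{L})=0$ forces $mlp(\dot{L})=ml(L)$ is a point the paper leaves implicit, but it is the same argument.
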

\begin{proof} For any a pointed
$(\mathcal{S},1)$-lattice path, we suppose the final step in this
path is $s$. Then $s=(1,1)$ since $l((1,-1))=0<1$. If we delete the
final step of this path and erase the marked point, we will obtain a
$\mathcal{S}$-path with length $n$ and absolute minimum length $m$.
By Theorem 4.3,  the number of the pointed $(\mathcal{S},1)$-lattice
path with length $n+1$ and absolute minimum  pointed length $m$ in
which the length of the final step is no less than $1$   is equal to
the number of the $\mathcal{S}$-nonnegative paths with length $n$.
By Lemma 2.3, we have $F_1(z)=1+z[F_1(z)]^2$ since
$\mathcal{S}=\{(1,1),(1,-1)\}$, $w(s)=1$ for any $s\in\mathcal{S}$,
$l((1,1))=1$ and $l((1,-1))=0$. Hence, the number of the
$\mathcal{S}$-nonnegative paths with length $n$ is the $n$-th
Catalan number. This complete the proof.
\end{proof}

\begin{center}
\includegraphics[width=6cm]{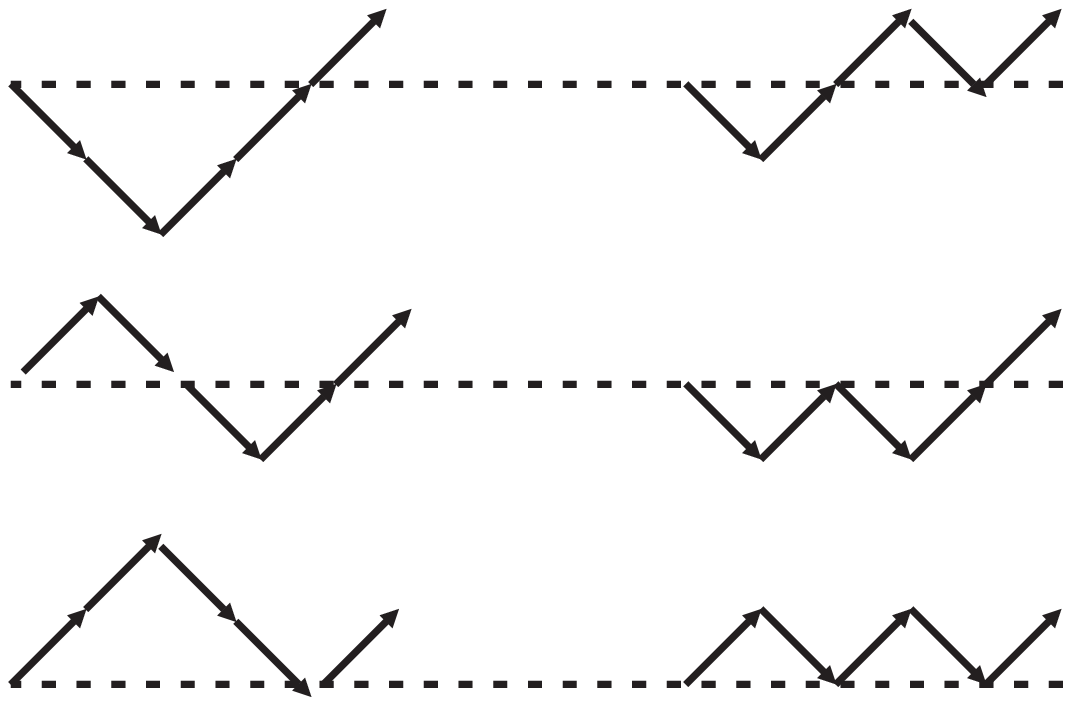}\\
Fig.5. An example of Corollary 4.5, where $n=2$
\end{center}

\section{Conclusions}
By Theorems 3.4 and 4.3, the lattice paths in Classes 1 and 2 have
the Chung-Feller properties of both Dyck type and Motzkin type. We
only prove the Chung-Feller theorem of Dyck type for Class 3. In
fact, the lattice paths in Class 3 have the Chung-Feller properties
of Motzkin type as well. We don't include this result in this paper
since the statements are very complicate.

 There are
many lattice paths which have the Chung-Feller properties of both
Dyck type and Motzkin type. For simplify, we set
$\mathcal{S}=\{(1,1),(1,-1)\}$, $w(s)=1$ for any $s\in \mathcal{S}$,
$l((1,1))=1$ and $l((1,-1))=0$. Let $\theta$ be a mapping from
$\mathscr{L}$ to $\mathbb{N}$, where $\mathscr{L}$ is the set of all
the $(\mathcal{S},1)$-lattice path. $\theta$ is called a parameter
on $(\mathcal{S},1)$-lattice path. For any $0\leq m\leq n$, if the
number of the $(\mathcal{S},1)$-lattice path $L$ with length $n$
such that $\theta(L)=m$ is independent on $m$, then we say that
$\theta$ is a {\it Chung-Feller parameter} for
$(\mathcal{S},1)$-lattice paths. There are two Chung-Feller
parameters on $(\mathcal{S},1)$-lattice path: non-positive pointed
length and absolute minimum pointed length. To end this paper, we
propose a problem: are there the other Chung-Feller parameters on
$(\mathcal{S},1)$-lattice paths?

 \end{document}